
\documentclass{article}

\usepackage{microtype}
\usepackage{graphicx}
\usepackage{caption}
\usepackage{subcaption}
\usepackage{booktabs} 
\usepackage{bm}
\usepackage{natbib}

\usepackage{makecell}

\usepackage{wrapfig}
\usepackage{enumitem}

\usepackage{hyperref}
\usepackage{url}


\usepackage[accepted]{icml2026}



\makeatletter
\renewcommand{\ICML@appearing}{%
  \textit{Accepted to the $\mathit{43}^{rd}$ International Conference on Machine Learning (ICML 2026).}%
}
\AtBeginDocument{%
  \hypersetup{%
    pdfsubject={Accepted to the International Conference on Machine Learning (ICML 2026)}%
  }%
}
\makeatother

\usepackage{amsmath}
\usepackage{amssymb}
\usepackage{mathtools}
\usepackage{amsthm}

\usepackage[capitalize,noabbrev]{cleveref}

\newcommand{\dd}{\,\mathrm{d}}
\newcommand{\ddd}{\mathrm{d}}

\newcommand{\Pspace}{\mathcal{P}}
\newcommand{\R}{\mathbb{R}}
\DeclarePairedDelimiter{\norm}{\lVert}{\rVert}

\newcommand{\E}{\mathbb{E}}

\newcommand*{\eqset}{\coloneqq}

\def\cX{\mathcal{X}}

\def\cH{\mathcal{H}}

\theoremstyle{plain}
\newtheorem{theorem}{Theorem}[section]
\newtheorem{proposition}[theorem]{Proposition}
\newtheorem{lemma}[theorem]{Lemma}
\newtheorem{corollary}[theorem]{Corollary}
\theoremstyle{definition}
\newtheorem{definition}[theorem]{Definition}
\newtheorem{assumption}[theorem]{Assumption}
\theoremstyle{remark}
\newtheorem{remark}[theorem]{Remark}

\usepackage[disable,textsize=tiny]{todonotes}

\icmltitlerunning{Improved Stochastic Optimization of LogSumExp}

\begin{document}

\twocolumn[
  \icmltitle{Improved Stochastic Optimization of LogSumExp}



  \icmlsetsymbol{equal}{*}

  \begin{icmlauthorlist}
    \icmlauthor{Egor Gladin}{hse}
    \icmlauthor{Alexey Kroshnin}{wias}
    \icmlauthor{Jia-Jie Zhu}{kth}
    \icmlauthor{Pavel Dvurechensky}{wias}
  \end{icmlauthorlist}

  \icmlaffiliation{hse}{HSE University, Moscow, Russia}
  \icmlaffiliation{wias}{WIAS, Berlin, Germany}
  \icmlaffiliation{kth}{Department of Mathematics, KTH Royal Institute of Technology, Stockholm, Sweden}

  \icmlcorrespondingauthor{Egor Gladin}{elgladin@hse.ru}

  \icmlkeywords{log-sum-exp, stochastic optimization, distributionally robust optimization, optimal transport}

  \vskip 0.3in
]



\printAffiliationsAndNotice{}  

\begin{abstract}
    The LogSumExp function, dual to the Kullback-Leibler (KL) divergence, plays a central role in many important optimization problems,
    including entropy-regularized optimal transport (OT) and distributionally robust optimization (DRO). In practice, when the number of exponential terms inside the logarithm is large or infinite, optimization becomes challenging since computing the gradient requires differentiating every term. We propose a novel convexity- and smoothness-preserving approximation to LogSumExp that can be efficiently optimized using stochastic gradient methods. This approximation is rooted in a sound modification of the KL divergence in the dual, resulting in a new $f$-divergence called the \emph{Safe KL divergence}. Our experiments and theoretical analysis of the LogSumExp-based stochastic optimization, arising in DRO and continuous OT, demonstrate the advantages of our approach over existing baselines.
\end{abstract}

\section{Introduction}\label{sec:intro}

Optimization problems arising in various fields involve the LogSumExp function, or, more generally, the log-partition functional
\begin{equation}\label{def:F}
    F(\varphi; \mu) \eqset \log \int e^{\varphi(x)} \dd \mu(x) \in (-\infty, \infty] 
\end{equation}
mapping a measurable function $\varphi$ to $(-\infty, \infty]$ based on a probability measure $\mu$. The goal in such optimization problems is to minimize an objective involving $F$ w.r.t.\ $\varphi$ over some class.

LogSumExp function appears commonly in optimization objectives, e.g., multiclass classification with softmax probabilities \citep{bishop2006pattern},
semi-dual formulation of entropy-regularized optimal transport (OT) \citep{peyre2018computational,genevay2016stochastic},
minimax problems \citep{pee2011solving},
distributionally robust optimization (DRO) \citep{huKullbackLeiblerDivergenceConstrained,ben-tal_robust_2013,kuhnDistributionallyRobustOptimization2024}, maximum likelihood estimation (MLE) for exponential families and graphical models \citep{wainwright2008graphical},
variational Bayesian methods \citep{khan2018fast,khanBayesianLearningRule2023},
information geometry \citep{amariMethodsInformationGeometry2000},
KL-regularized Markov decision processes \citep{tiapkin2024demonstrationregularized}.
These problems involve minimizing $F(\varphi; \mu)$ w.r.t. a function $\varphi$, potentially parameterized by a vector $\theta$, e.g., a vector of neural network weights.
Such optimization is characterized by two challenges. First, the decision variable $\varphi$ or $\theta$ often has large or infinite dimension.
Second, the support of the measure $\mu$ can also be large or infinite. 
The first challenge is usually addressed by the use of first-order methods like Stochastic Gradient Descent (SGD), which are suitable for high-dimensional problems due to their cheap iterations. 
The second challenge is more delicate. Indeed, if $F(\varphi;\mu)$ is approximated by a large finite sum, then computing the exact gradient requires differentiating all exponential terms. Replacing this full aggregation by a sampled subset generally leads to biased gradient estimators \citep{lin2026sampled}.


In the current work, we propose a general-purpose approach to tackle both mentioned challenges. To that end, we use a SoftPlus approximation of $F(\varphi; \mu)$ that 
moves the expectation outside the logarithm, which 
allows using stochastic gradient methods while remaining close to the original objective. 
We start with a variational formulation analogous to the one in the Gibbs principle, but with the KL-divergence replaced with another $f$-divergence -- the \emph{(Overflow-)Safe KL} divergence. 
The corresponding variational problem can be of interest itself, as it possesses some properties which can be beneficial compared to the KL penalty -- e.g., uniform density bound. 
Moreover, it can be also viewed as an approximation of a conditional value at risk functional (CVaR).
In fact, the same functional (with different parameters) appeared in \cite{soma2020statistical} in the context of smooth CVaR approximation.
Thus, we demonstrate that it generates a family of problems including CVaR and LogSumExp minimization as limit cases.

\textbf{Related works.} 
The LogSumExp functional~\eqref{def:F} has appeared in many applications and has often been treated on a case-by-case basis. 
\citet{bouchard2007efficient} study three upper bounds on LogSumExp for approximate Bayesian inference; one of them is a particular case of the approximation proposed in the present work.
\citet{aueb2016one} construct a bound on softmax probabilities and show that it leads to a bound on LogSumExp in the context of multiclass classification.
\citet{nielsen2016guaranteed} approximate LogSumExp in the context of estimating divergences between mixture models, combining bounds based on $\min$ and $\max$.
\citet{Tucker2017REBAR,Luo2020SUMO} propose and study unbiased estimators for latent-variable models based on Russian Roulette truncation.
\citet{LyneGirolamiEtAl2015RussianRoulette,SpringShrivastava2017LSHPartition} focus on estimating the partition function itself, rather than on optimization problems involving the partition function.

KL-regularized and KL-constrained DRO provide important finite-sum instances of LogSumExp optimization.
\citet{huKullbackLeiblerDivergenceConstrained} and, subsequently, \citet{levy2020large} study DRO problems with $f$-divergences.
When the ambiguity set is the unit simplex and a KL-divergence penalty is used, the resulting objective is the LogSumExp of the losses over the entire dataset.
The batch-based approximation of \citet{levy2020large} replaces this objective with an average of LogSumExp terms computed on individual batches, which introduces a bias that can be reduced by significantly increasing the batch size.

Another closely related line of work treats KL-regularized DRO and other log-expectation-exponential objectives through stochastic compositional optimization. 
In particular, \citet{qi2021online} reformulate KL-regularized DRO as a compositional optimization problem and develop online stochastic methods, a perspective that was subsequently extended to KL-constrained DRO~\citep{qi2022stochastic_constrained_dro}, broader finite-sum coupled compositional optimization problems~\citep{wang2022finite_sum_coupled}, and LogSumExp-type objectives arising in contrastive learning~\citep{yuan2022sogclr}. 

Our approach is complementary to this line of work.
Rather than introducing a new optimizer for the original finite-sum LogSumExp objective, we modify the variational formulation itself by replacing the KL penalty with the Safe KL divergence.
This yields a smooth approximation of the log-partition functional whose stochastic gradients are unbiased for the approximating objective and whose weights are uniformly bounded.
Consequently, the basic convergence analysis avoids both the batch-size-dependent bias of sampled LogSumExp approximations and the exponential constants that may arise when optimizing objectives containing raw exponentials.
The construction also applies naturally beyond finite sums, including continuous-measure settings where the log-partition functional is defined by an integral.

Deterministic LogSumExp \emph{maximization} and minimization were considered in \citet{Selvi2020ConvexMaximization} and \citet{KanNagyRuthotto2023LSEMINK}, respectively.
For stabilizing numerical evaluation of the LogSumExp function, we refer to \citet{BlanchardHighamHigham2020AccurateLogSumExp,Higham2021WhatIsLogSumExp}.

\textbf{Contributions.} Our main contributions are as follows:
\begin{enumerate}
    \item We introduce a general-purpose and computationally efficient approach for handling the LogSumExp function in large-scale optimization problems by proposing a novel relaxation of this function. The proposed relaxation preserves key properties of the original LogSumExp function, such as convexity and smoothness, and turns the problem into an expectation minimization problem amenable to standard SGD-type methods. 
    Furthermore, our method only requires a simple and tunable scalar parameter, allowing the relaxation to be made arbitrarily close to the original LogSumExp objective as desired.
    
    \item We provide the theoretical backbone of this approximation, demonstrating that it is based on a modified version of the KL-divergence in the dual formulation. We term the resulting $f$-divergence the \emph{Safe KL} divergence.
    It can be applied to various applications where KL-divergence is used.
    
    \item We empirically evaluate the method on continuous entropy-regularized OT and several DRO formulations. The proposed method shows competitive performance in comparison to representative application-specific baselines  
    and circumvents the overflow issue (Remark \ref{remark:overflow_issue}). It can also be combined with existing techniques. Therefore, it serves as a versatile tool for solving large-scale optimization problems.
    \item Additionally, we provide insights into a few remarkable connections between the proposed approximation and existing notions such as the conditional value-at-risk.
\end{enumerate}

\textbf{Notation.}
Given $a, a_1, \dots, a_n \in \R$, we define
\[
    \operatorname{LogSumExp}(a_1, \dots, a_n) \eqset \log \Bigl(\sum_{i=1}^n e^{a_i}\Bigr)
\]
and $\operatorname{SoftPlus}(a) \eqset \log (1 + e^{a})$. 
Given a measurable space $\cX$, by $\Pspace(\cX)$ we denote the space of probability measures on $\cX$, and by $\mathcal{C}(\cX)$ the space of continuous functions on $\cX$. For $\mu, \nu \in \Pspace(\cX)$ the Kullback--Leibler (KL) divergence is
$$
D_{KL}(\mu, \nu) \eqset \begin{cases}
    \int_{\cX} \log \frac{\ddd\mu}{\ddd\nu}(x) \dd\mu(x) & \mu \ll \nu, \\
    +\infty & \text{otherwise},
\end{cases}
$$
where $\log$ is the natural logarithm and $\mu \ll \nu$ denotes that $\mu$ is absolutely continuous w.r.t.\ $\nu$.

\section{SoftPlus Approximation of Log-Partition Function}\label{sec:approx}

In this section, we present our approximation to the log-partition function~\eqref{def:F} and describe its theoretical properties. 
Recall that by the Gibbs variational principle
\begin{align}
    F(\varphi ; \mu) = \sup_{\nu}\Big\{\int_{\cX} \varphi(x) \dd \nu(x) - D_{KL}(\nu, \mu) : \nonumber\\
    \nu \in \Pspace(\cX),\, \int_{\cX} |\varphi(x)| \dd \nu(x) < \infty \Big\} \label{eq:F_dual}
\end{align}
with the maximum attained at the Gibbs measure 
$
d \nu^*(x) = e^{\varphi(x) - F(\varphi ; \mu)} d \mu(x),
$ 
once $F(\varphi ; \mu) < \infty$, see \citep[Chapter~XI, Theorem~VI]{gibbs1902elementary} or \citep[Proposition~4.7]{polyanskiy2025information} for the modern treatment.

We are going to construct an approximation of $F$ with better regularity properties by changing $D_{KL}$ to another $f$-divergence. 
Specifically, for any $0 < \rho < 1$, let us define the following.
\begin{definition}[Safe KL entropy]
\label{def:safe-entropy}    
We define the Safe KL entropy generator
$f_\rho \colon [0, \infty) \to \R$ by
\begin{equation}\label{def:f_rho}
    f_\rho(t) \eqset \begin{cases}
        t \log t + 1 + \frac{1 - \rho t}{\rho} \log (1 - \rho t), & 0 \le t \le \frac{1}{\rho}, \\
        +\infty, & \text{otherwise}.
    \end{cases}
\end{equation}
The resulting $f_\rho$-divergence, which we refer to as the \emph{Safe KL divergence}, is given by
\begin{equation}\label{def:KL_rho}
    D_\rho(\nu, \mu) \eqset \begin{cases}
        \int_{\cX} f_\rho\left(\frac{d \nu}{d \mu}(x)\right) \dd \mu(x), & \nu \ll \mu, \\
        +\infty, & \text{otherwise}.
    \end{cases}
\end{equation}
\end{definition}

\begin{figure}
    \centering
    \includegraphics[width=0.9\linewidth]{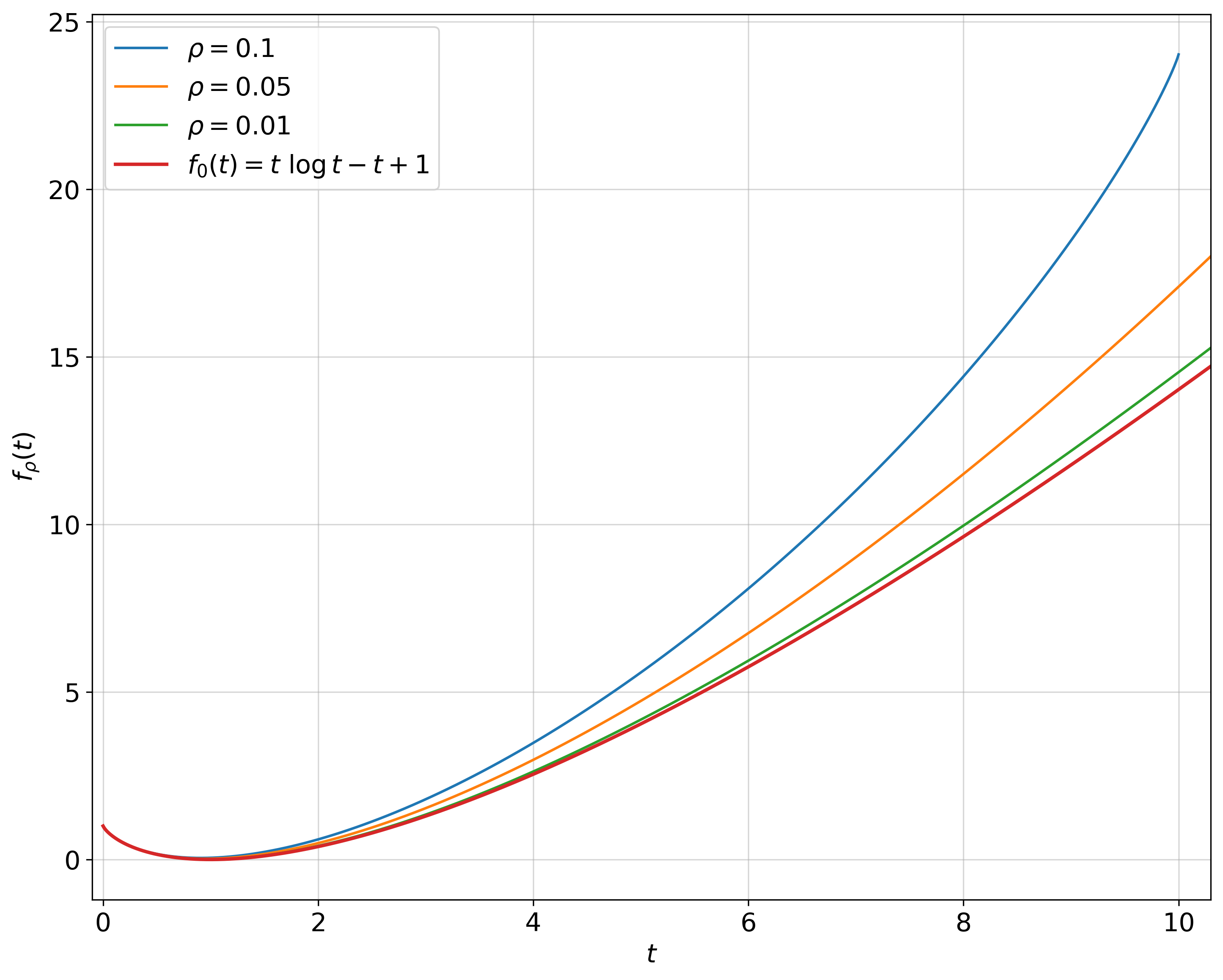}
    \caption{$f_{\rho}(t)$ for different values of $\rho$.}
    \label{fig:divergence}
\end{figure}

Clearly, $f_\rho(t) \to f_0(t) \eqset t \log t + 1 - t$ as $\rho \to 0$.
Since $f_0$ induces the standard KL-divergence, $D_\rho$ is its approximation with accuracy regulated by the parameter $\rho$.

Using the variational representation, we define
\begin{multline}\label{def:F_rho}
    F_\rho(\varphi ; \mu) \eqset \sup_{\nu}\left\{\int_{\cX} \varphi(x) \dd \nu(x) - D_\rho(\nu, \mu) : \right. \\
    \left. \nu \in \Pspace(\cX),\, \int_{\cX} |\varphi(x)| \dd \nu(x) < \infty \right\} .
\end{multline}
(i.e., $F_\rho(\cdot ; \mu)$ is the convex conjugate of $D_\rho(\cdot, \mu)$).
Note that the last term in $f_\rho$ prevents the density $\frac{d \nu}{d \mu}$ from being too large. 
In particular, it cannot be greater than $\frac{1}{\rho}$. 
This can make the Safe KL divergence a reasonable choice for unbalanced OT or DRO, as it imposes a hard constraint on the reweighting unlike the standard $D_{KL}$.
Moreover, it can also be used instead of the entropy penalization in regularized OT (cf.\ capacity constrained transport in \citep[section~5.2]{benamou2015iterative}).

Again, by the convex duality and the variational principle \citep[see][Theorem~6]{birrell2022divergence}, we state the following properties.

\begin{lemma}
    The functional $F_\rho$ defined by \eqref{def:F_rho} has an equivalent variational representation
\begin{equation*}
    F_\rho(\varphi ; \mu) = \inf_{\alpha \in \R} \alpha + \int_{\cX} f_\rho^*(\varphi(x) - \alpha) \dd \mu(x) .  
\end{equation*}
\end{lemma}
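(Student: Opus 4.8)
The plan is to recognize the claimed right-hand side as the Lagrangian dual of the maximization defining $F_\rho$, obtained by dualizing the single linear constraint $\int \dd\nu = 1$ that distinguishes probability measures from arbitrary nonnegative measures, and then to evaluate the resulting unconstrained inner supremum pointwise via the Legendre transform $f_\rho^*$. Concretely, since $D_\rho(\nu,\mu) = +\infty$ unless $\nu \ll \mu$, I would first reparametrize \eqref{def:F_rho} in terms of the density $g \eqset \frac{d\nu}{d\mu}$, rewriting it as
$$F_\rho(\varphi;\mu) = \sup\left\{ \int \varphi g \dd\mu - \int f_\rho(g)\dd\mu : g \ge 0,\ \int g\dd\mu = 1 \right\},$$
where the pointwise bound $0 \le g \le \tfrac{1}{\rho}$ is already enforced by $f_\rho$ taking the value $+\infty$ outside $[0,\tfrac{1}{\rho}]$.

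Next I would introduce a scalar multiplier $\alpha \in \R$ for the normalization constraint and exchange the order of the supremum over $g$ and the infimum over $\alpha$, obtaining
$$F_\rho(\varphi;\mu) = \inf_{\alpha \in \R}\left\{ \alpha + \sup_{g \ge 0}\int \big[(\varphi - \alpha) g - f_\rho(g)\big]\dd\mu \right\}.$$
The inner term is an integral functional, so by Rockafellar's interchange theorem it may be evaluated by maximizing the integrand pointwise; for each $x$ this gives exactly $\sup_{s \ge 0}\{(\varphi(x)-\alpha)s - f_\rho(s)\} = f_\rho^*(\varphi(x)-\alpha)$, since the effective domain $[0,\tfrac{1}{\rho}]$ of $f_\rho$ lies in $[0,\infty)$. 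Substituting yields the asserted identity.

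The main obstacle is rigorously justifying the two interchanges. The minimax exchange between $\sup_g$ and $\inf_\alpha$ requires strong duality (no duality gap) for the constrained convex program; here the convexity of $f_\rho$ (each of $t\log t$ and the barrier term $\tfrac{1-\rho t}{\rho}\log(1-\rho t)$ being a composition of the convex map $u \mapsto u\log u$ with an affine argument) supplies convexity, while the density $g \equiv 1$, i.e.\ $\nu = \mu$, lies strictly inside the box $[0,\tfrac{1}{\rho}]$ precisely because $\rho < 1$ forces $1 < \tfrac{1}{\rho}$, furnishing the Slater-type constraint qualification. The interchange of supremum and integration in turn demands that $f_\rho$ be a convex normal integrand and that the relevant integral functionals be proper, which is where the side condition $\int|\varphi|\dd\nu < \infty$ in \eqref{def:F_rho} and the finiteness of $f_\rho^*$ on all of $\R$ (immediate, since the effective domain of $f_\rho$ is bounded) enter.

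Rather than reprove these general facts, I would invoke the variational formula for $f$-divergences of \citet[Theorem~6]{birrell2022divergence}, which packages exactly this duality, and merely verify that its hypotheses hold for the present setting: lower semicontinuity and convexity of $f_\rho$, and that $D_\rho$ is a bona fide $f$-divergence in the sense of Definition~\ref{def:safe-entropy}. The only genuinely problem-specific checks are therefore convexity and closedness of $f_\rho$ together with the strict feasibility observed above; everything else follows from the cited abstract duality.
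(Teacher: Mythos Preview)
Your proposal is correct and matches the paper's approach: the paper does not give a self-contained proof but simply invokes convex duality via \citet[Theorem~6]{birrell2022divergence}, which is precisely what you do after sketching the underlying Lagrangian mechanism. Your additional verification of the hypotheses (convexity of $f_\rho$, Slater feasibility at $g\equiv 1$ thanks to $\rho<1$, boundedness of the effective domain) is more detailed than the paper itself provides, but the route is the same.
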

It is straightforward to check the following. 

\begin{lemma}
    The conjugate function to $f_\rho$ is a rescaled SoftPlus, specifically,
    \begin{equation*}
        f_\rho^*(s) \eqset \sup_{t \in \R_+} s t - f_\rho(t) = \frac{1}{\rho} \log\left(1 + \rho e^s\right) - 1 .
    \end{equation*}
    Therefore, we obtain
    \begin{equation}\label{eq:F_rho_dual}
        F_\rho(\varphi ; \mu) = \inf_{\alpha \in \R} \alpha - 1 + \frac{1}{\rho} \int_{\cX} \log\left(1 + \rho e^{\varphi(x) - \alpha}\right) \dd \mu(x) .
    \end{equation}
\end{lemma}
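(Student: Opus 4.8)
The plan is to treat the first claim as a direct one-dimensional Legendre--Fenchel computation, since $f_\rho$ is a closed convex function on the line with effective domain $[0, 1/\rho]$; the preceding lemma has already reduced $F_\rho$ to an integral of $f_\rho^*$, so everything rests on identifying $f_\rho^*$ explicitly and then substituting.

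First I would differentiate $f_\rho$ on the interior of its domain. Differentiating $t \log t$ gives $\log t + 1$, while differentiating $\tfrac{1}{\rho}(1 - \rho t)\log(1 - \rho t)$ gives $-\log(1 - \rho t) - 1$, so the two constants cancel and
\begin{equation*}
    f_\rho'(t) = \log t - \log(1 - \rho t) = \log\frac{t}{1 - \rho t}, \qquad t \in (0, 1/\rho).
\end{equation*}
Since $f_\rho''(t) = \tfrac{1}{t} + \tfrac{\rho}{1 - \rho t} > 0$, the generator $f_\rho$ is strictly convex, so the supremum defining $f_\rho^*(s)$ is attained at the unique stationary point solving $s = f_\rho'(t)$. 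Solving $e^s = t/(1 - \rho t)$ yields the closed form
\begin{equation*}
    t^*(s) = \frac{e^s}{1 + \rho e^s}.
\end{equation*}
I would then note that $t^*(s)$ increases monotonically from $0$ to $1/\rho$ as $s$ sweeps $\R$, hence lies in the open interval $(0, 1/\rho)$ for every $s$; this confirms the maximizer is interior and spares us any separate boundary analysis.

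The computational heart is substituting $t^*$ back into $s t - f_\rho(t)$. Using $s = \log t^* - \log(1 - \rho t^*)$, the $t^* \log t^*$ terms cancel and the remaining logarithmic terms collect with coefficient $t^* + \tfrac{1}{\rho}(1 - \rho t^*) = \tfrac{1}{\rho}$, leaving $f_\rho^*(s) = -1 - \tfrac{1}{\rho}\log(1 - \rho t^*)$. The elementary identity $1 - \rho t^*(s) = (1 + \rho e^s)^{-1}$ then converts this into $\tfrac{1}{\rho}\log(1 + \rho e^s) - 1$, as claimed. This collection of terms is the one place where a sign or coefficient slip could creep in, so it is the step I would check most carefully; otherwise the argument is routine.

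For the displayed formula for $F_\rho$, I would simply insert this expression for $f_\rho^*$ into the variational representation from the previous lemma and pull the additive constant $-1$ outside the integral. This is legitimate because $\mu$ is a probability measure, so $\int (-1) \dd\mu = -1$, yielding equation \eqref{eq:F_rho_dual} directly.
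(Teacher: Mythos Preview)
Your proposal is correct and is precisely the routine Legendre--Fenchel computation the paper leaves implicit (the paper offers no proof beyond ``it is straightforward to check''). The derivative, the stationary point $t^*(s) = e^s/(1+\rho e^s)$, the back-substitution, and the final appeal to the previous lemma with $\int(-1)\dd\mu = -1$ are all accurate.
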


In essence, we have replaced the exponential function with a rescaled SoftPlus. 
Furthermore, it is easy to see that the optimal $\alpha_{\rho} = \alpha_{\rho}(\varphi;\mu)$ satisfies
\begin{equation}\label{eq:opt_alpha_rho}
    \int_{\cX} \frac{e^{\varphi(x) - \alpha_{\rho}}}{1 + \rho e^{\varphi(x) - \alpha_{\rho}}} \dd \mu(x) = 1 ,
\end{equation}
in particular, $\alpha_{\rho} < F(\varphi; \mu)$.
Moreover, the maximum in \eqref{def:F_rho} is attained at
$\displaystyle d \nu^*_\rho(x) = \frac{e^{\varphi(x) - \alpha_{\rho}}}{1 + \rho e^{\varphi(x) - \alpha_{\rho}}} \dd \mu(x)$.
Note that $0 < \frac{d \nu^*_\rho(x)}{d \mu(x)} < \frac{1}{\rho}$, which is due to the fact that the derivative of $t \log t$ explodes at $0$, preventing reaching the constraint.

The next proposition (proved in Appendix \ref{app:proofs}) ensures that $F_\rho$ is a valid approximation of $F$. 

\begin{proposition}\label{prop:approx}
    Let $\mu \in \Pspace(\cX)$ and $\varphi$ be a measurable function on $\cX$.
    \begin{enumerate}[label=(\roman*),noitemsep,nolistsep,leftmargin=*]
    \item\label{monot} For all $0 < \rho \le \rho' < 1$, it holds $F_{\rho'}(\varphi ; \mu) \le F_\rho(\varphi ; \mu)$.
    \item\label{lim} As $\rho \to 0+,\; F_\rho(\varphi ; \mu) \to F_0(\varphi ; \mu) \eqset F(\varphi ; \mu)$.
    \item\label{alpha_and_lower_bounds} If $F(2 \varphi; \mu) < \infty$, denote $\varkappa(\varphi ; \mu):=e^{F(2\varphi) - 2F(\varphi)}$, then it holds for any $\rho \in \bigl(0,\frac{1}{\varkappa(\varphi ; \mu)}\bigr)$
    \begin{equation}\label{eq:lower_bound}
        F_\rho(\varphi ; \mu) \ge F(\varphi; \mu) + \frac{\rho}{2} +\log(1-\rho \varkappa(\varphi ; \mu)) .
    \end{equation}
    \item\label{lower_bound_with_max} If $\varphi(x) \le M$ for all $x \in \cX$, then $$F_\rho(\varphi ; \mu) \ge F(\varphi; \mu) - \rho e^{M - F(\varphi; \mu)}$$ for any $\rho \in \left( 0, e^{F(\varphi; \mu) - M} \right)$.
    \end{enumerate}
\end{proposition}

In particular, \ref{monot} and \ref{alpha_and_lower_bounds} show that $F - O(\rho) \le F_{\rho} \le F$, and thus the parameter $\rho$ allows one to control the approximation accuracy. 
In the case of LogSumExp, the above proposition yields the following simple bounds.

\begin{corollary}\label{cor:logsumexp_approx}
    Let $a_1, \dots, a_n \in \R$. 
    Then for any $\rho \in (0,1)$
    \begin{align*}
        \operatorname{LogSumExp}&(a_1, \dots, a_n) - \rho \le
        \\ &\le \inf_{\alpha \in \R} \alpha - 1 + \frac{1}{\rho} \sum_{i=1}^n \log(1 + \rho e^{a_i - \alpha}) \\
        &\le \operatorname{LogSumExp}(a_1, \dots, a_n) .
    \end{align*}
    
\end{corollary}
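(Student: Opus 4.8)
The plan is to recognize the quantity in the middle of the claimed inequality as an instance of $F_\rho$ for a cleverly chosen probability measure, and then invoke Proposition~\ref{prop:approx}. Set $S \eqset \operatorname{LogSumExp}(a_1,\dots,a_n)$, let $\mu \eqset \frac1n\sum_{i=1}^n \delta_i$ be the uniform measure on $\{1,\dots,n\}$, and define $\varphi(i) \eqset a_i + \log n$. With this choice $F(\varphi;\mu) = \log\bigl(\frac1n\sum_{i=1}^n e^{a_i+\log n}\bigr) = S$. The key first step is to verify, by substituting $\varphi$ and the parameter $\rho/n$ into the dual formula \eqref{eq:F_rho_dual}, that
\begin{equation*}
  F_{\rho/n}(\varphi;\mu) = \inf_{\alpha\in\R}\ \alpha - 1 + \frac{n}{\rho}\cdot\frac1n\sum_{i=1}^n \log\bigl(1 + \tfrac{\rho}{n}e^{a_i+\log n-\alpha}\bigr) = \inf_{\alpha\in\R}\ \alpha - 1 + \frac1\rho\sum_{i=1}^n \log\bigl(1+\rho e^{a_i-\alpha}\bigr),
\end{equation*}
where I used $\tfrac{\rho}{n}e^{\log n} = \rho$ and the cancellation of the outer $\frac1n$. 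Thus the middle expression in the corollary is exactly $F_{\rho/n}(\varphi;\mu)$.

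For the upper bound I would use that $F_\rho(\varphi;\mu)$ is nonincreasing in $\rho$ (Proposition~\ref{prop:approx}(i)) and converges to $F(\varphi;\mu)$ as $\rho\to0+$ (Proposition~\ref{prop:approx}(ii)); together these give $F_{\rho/n}(\varphi;\mu)\le F(\varphi;\mu)=S$ for every $\rho>0$. (Alternatively, one obtains this directly by evaluating the infimum at $\alpha=S$ and applying $\log(1+x)\le x$.) For the lower bound, apply Proposition~\ref{prop:approx}(iv) with $M \eqset \log n + \max_i a_i$, which is a valid upper bound for $\varphi$, and with parameter $\rho/n$ in place of $\rho$; this yields $F_{\rho/n}(\varphi;\mu)\ge S - \frac{\rho}{n}e^{M-S}$. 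Finally, since $e^{M-S} = \frac{n\,e^{\max_i a_i}}{\sum_j e^{a_j}}\le n$, the factor $\frac1n$ is absorbed and the bound becomes $F_{\rho/n}(\varphi;\mu)\ge S-\rho$, completing the proof.

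The main point requiring care is the joint reparametrization: substituting the uniform measure alone produces an unwanted $\frac1n$ together with a mismatched argument inside the logarithm, and only the simultaneous shift $\varphi\mapsto\varphi+\log n$ and rescaling $\rho\mapsto\rho/n$ reproduce the unnormalized sum exactly. The same $\log n$ shift then forces $e^{M-S}\le n$, which is precisely what cancels the $\frac1n$ emitted by Proposition~\ref{prop:approx}(iv) and leaves the clean slack $\rho$. One should also check the admissibility hypothesis $\rho/n\in(0,e^{S-M})$ of part (iv); this holds because $e^{S-M}\ge \frac1n$ while $\rho/n<\frac1n$ for $\rho<1$. I would deliberately avoid proving the lower bound directly via the quadratic estimate $\log(1+x)\ge x-\tfrac{x^2}{2}$, since that estimate degrades badly for large $x$ and makes the resulting per-term bound unbounded below in $\alpha$; routing through part (iv) sidesteps this entirely.
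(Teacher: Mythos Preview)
Your proof is correct and follows essentially the same route as the paper's: identify the middle expression as $F_{\rho/n}$ for a uniform empirical measure (the paper places the atoms at the values $a_i$ and uses $\varphi=\mathrm{id}$, shifting $\alpha$ by $\log n$, whereas you place them on the indices and shift $\varphi$ by $\log n$---these are interchangeable reparametrizations), then invoke Proposition~\ref{prop:approx}(i) for the upper bound and (iv) for the lower bound, with the same estimate $e^{M-S}\le n$ absorbing the factor $1/n$.
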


For $\rho = 1$ our approximation coincides with Bouchard's bound for LogSumExp \citep{bouchard2007efficient}.

\subsection{Links to CVaR}

Recall that the conditional value at risk (CVaR) w.r.t.\ a probability measure $\mu \in \Pspace(\cX)$ at level $\rho \in (0,1)$, associated with a function $\varphi$, can be defined (in the case of continuous distribution) as
\begin{align*}
    \operatorname{CVaR}_\rho(\varphi; \mu) &\eqset \E_{X \sim \mu} \left[\varphi(X) \middle| \varphi(X) \ge Q_{1-\rho} \right] \\
&= \frac{1}{\rho} \int_{\varphi(x) \ge Q_{1-\rho}} \varphi(x) \dd \mu(x) ,
\end{align*}
where $Q_{1-\rho}$ is the $(1-\rho)$-quantile of $\varphi(X)$, $X \sim \mu$ \citep{rockafellar2000optimization}.
Moreover, by Theorem~1 of \citet{rockafellar2000optimization}, CVaR also has the following formulation:
\begin{equation}\label{eq:cvar_var}
    \operatorname{CVaR}_\rho(\varphi; \mu) = \inf_{\alpha \in \R} \alpha + \frac{1}{\rho} \int_{\cX} (\varphi(x) - \alpha)_+ \dd \mu(x) .
\end{equation}
Remarkably, in \cite{soma2020statistical} the authors obtained a smooth approximation to CVaR which, up to an additive constant, has the same form as $F_\rho$. However, they considered the approximation w.r.t.\ a different parameter---a "temperature" inside SoftPlus.
Finally, \citet{levy2020large} proposed another similar smoothed version of CVaR (KL-regularized CVaR) in the context of DRO.
For our approximation, we obtain the following bounds.

\begin{proposition}\label{prop:cvar}
    For all $0 < \rho < 1$ and $\lambda > 0$  
    \begin{align}
        \operatorname{CVaR}_\rho(\varphi; \mu) + \lambda (\log \rho - 1) 
        \le \lambda F_\rho(\varphi / \lambda ; \mu) \le \nonumber
        \\\le \operatorname{CVaR}_\rho(\varphi; \mu) + \lambda \left(\log \rho - 1 + \frac{1}{\rho}\right) . \label{eq:lower_cvar}
    \end{align}
\end{proposition}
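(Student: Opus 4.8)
The plan is to reduce the statement to the Rockafellar--Uryasev variational formula \eqref{eq:cvar_var} for CVaR by massaging the dual representation \eqref{eq:F_rho_dual} of $F_\rho$. First I would write out $\lambda F_\rho(\varphi/\lambda;\mu)$ using \eqref{eq:F_rho_dual}, obtaining $\inf_\alpha \lambda\alpha - \lambda + \tfrac{\lambda}{\rho}\int\log(1+\rho e^{\varphi(x)/\lambda-\alpha})\dd\mu(x)$. The key manipulation is a change of variables in the infimum that absorbs the factor $\rho$ inside the logarithm: setting $\beta = \lambda\alpha - \lambda\log\rho$ turns $\rho e^{\varphi/\lambda-\alpha}$ into $e^{(\varphi-\beta)/\lambda}$ and simultaneously rewrites $\lambda\alpha-\lambda$ as $\beta + \lambda(\log\rho-1)$. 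This pulls the constant $\lambda(\log\rho-1)$ out of the infimum and leaves
\begin{equation*}
\lambda F_\rho(\varphi/\lambda;\mu) = \lambda(\log\rho-1) + \inf_{\beta\in\R}\Big[\beta + \tfrac{\lambda}{\rho}\int \log\big(1 + e^{(\varphi(x)-\beta)/\lambda}\big)\dd\mu(x)\Big].
\end{equation*}

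The bracketed expression is exactly the CVaR functional \eqref{eq:cvar_var} with the hinge $(\varphi-\beta)_+$ replaced by its SoftPlus smoothing. So the second step is to invoke the elementary sandwich $v_+ \le \log(1+e^v) \le v_+ + \log 2$, applied pointwise with $v = (\varphi(x)-\beta)/\lambda$. Multiplying by $\lambda/\rho$ gives $\tfrac{1}{\rho}(\varphi-\beta)_+ \le \tfrac{\lambda}{\rho}\log(1+e^{(\varphi-\beta)/\lambda}) \le \tfrac{1}{\rho}(\varphi-\beta)_+ + \tfrac{\lambda\log 2}{\rho}$. Integrating against the probability measure $\mu$ (so that the constant $\tfrac{\lambda\log2}{\rho}$ integrates to itself), adding $\beta$, and then taking the infimum over $\beta$ --- which preserves both inequalities and reproduces \eqref{eq:cvar_var} on the outer bounds --- yields $\operatorname{CVaR}_\rho(\varphi;\mu) \le \inf_\beta[\cdots] \le \operatorname{CVaR}_\rho(\varphi;\mu) + \tfrac{\lambda\log2}{\rho}$. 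Combining with the displayed identity gives the two bounds, and since $\log 2 \le 1$ the upper constant $\tfrac{\lambda\log 2}{\rho}$ is dominated by $\tfrac{\lambda}{\rho}$, matching (and in fact slightly sharpening) the stated upper bound; the lower bound comes out as an exact equality between the inner infimum and $\operatorname{CVaR}_\rho$ on the hinge side.

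I expect the only delicate points to be bookkeeping rather than conceptual. The change of variables must be tracked carefully so that the $\lambda(\log\rho-1)$ term is extracted with the correct sign, and one must justify that taking $\inf_\beta$ of a pointwise inequality between the two integral functionals is legitimate --- it is, since $g(\beta)\le h(\beta)$ for all $\beta$ implies $\inf g \le \inf h$, and adding a constant commutes with the infimum. A minor technical caveat is the integrability of $\varphi$ under $\mu$, needed for the integrals and the CVaR formula to be finite and well-defined; I would assume the same integrability conditions implicit in \eqref{def:F_rho} and \eqref{eq:cvar_var}. Everything else is the standard SoftPlus-versus-hinge comparison, so I anticipate no substantive obstacle beyond the algebra of the substitution.
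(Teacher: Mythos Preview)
Your proposal is correct and follows essentially the same route as the paper: both arguments write out the dual representation \eqref{eq:F_rho_dual}, absorb $\rho$ via the substitution $\beta=\lambda\alpha-\lambda\log\rho$, and then sandwich the SoftPlus by the hinge to recover \eqref{eq:cvar_var}. The only difference is cosmetic---the paper applies the hinge bound before the change of variables and uses $\log(1+e^t)\le t_++1$, whereas you change variables first and use the slightly sharper $\log(1+e^t)\le t_++\log 2$, yielding the improved constant $\lambda\log 2/\rho$ in the upper bound.
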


\subsection{The Case of Parametric Models}
In some applications, the function $\varphi$ is defined as a parametric loss function $L(x, \theta)$, and the goal is to minimize an objective involving \eqref{def:F} w.r.t. $\theta$ to find the best model from the parametric family. 
For notational convenience, we write
\[
    F(\theta)
    \eqset
    F(L(\cdot,\theta);\mu)
    =
    \log\int_{\cX} e^{L(x,\theta)}\,\dd\mu(x),
\]
and the problem of interest reads as
\[
    F^\star\eqset \min_{\theta\in\Theta}F(\theta),
\]
where $\Theta \subset \R^d$ is a nonempty compact convex parameter set.
Combining our approximation \eqref{eq:F_rho_dual} and the minimization w.r.t.\ parameter $\theta$, we obtain the following minimization problem
\begin{equation}\label{eq:parametric_problem}
    \min_{\theta \in \Theta, \alpha \in \mathcal{A}} G_\rho(\theta, \alpha),
\end{equation}
where
\[
    G_\rho(\theta, \alpha) \eqset \alpha - 1 + \frac{1}{\rho} \int_{\cX} \log\left(1 + \rho e^{L(x, \theta) - \alpha}\right) \dd \mu(x),
\]
and \(\mathcal{A} \subseteq \R\) is an interval to be specified later. 
Clearly, $G_\rho$ is convex in $\alpha$. 
Moreover, if $L$ is convex in $\theta$ for $\mu$-a.e.\ $x$, then $G_\rho$ is jointly convex, meaning that our approximation preserves convexity.

Note that 
\begin{multline*}
    f_\rho(t) = \frac{1}{\rho} \left((\rho t) \log(\rho t) + (1 - \rho t) \log(1 - \rho t)\right) + \\ + 1 - t \log \rho .
\end{multline*}
Thus, unlike the KL entropy function $t \log t + 1 - t$, $f_\rho$
possesses the following favorable properties:
\begin{lemma}\label{Lm:str_convexity}
    The entropy function $f_\rho$ is $\rho$-strongly convex. 
    Its conjugate function $f_\rho^*$ is $\frac{1}{\rho}$-smooth. 
\end{lemma}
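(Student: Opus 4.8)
The plan is to reduce both assertions to elementary one-dimensional second-derivative estimates, exploiting that $f_\rho$ is smooth on the open interval $(0,1/\rho)$ (where it is finite) and that its conjugate is already available in closed form from the preceding lemma. First I would differentiate $f_\rho$. A direct computation using the product rule gives $f_\rho'(t) = \log\frac{t}{1-\rho t}$, and differentiating once more yields
\begin{equation*}
    f_\rho''(t) = \frac{1}{t} + \frac{\rho}{1 - \rho t}, \qquad t \in \left(0, \tfrac{1}{\rho}\right).
\end{equation*}
On this interval $t < 1/\rho$ forces $\frac{1}{t} > \rho$, while the second summand is strictly positive, so $f_\rho''(t) > \rho$ uniformly; this is exactly $\rho$-strong convexity. (One can minimize the right-hand side at $t = \frac{1}{2\rho}$ to obtain the sharp constant $4\rho$, but only the weaker bound $\rho$ is claimed, so I would not pursue the tight version.)

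For the smoothness of the conjugate I would offer two routes. The quick one is to invoke the standard Legendre duality fact that the conjugate of a $\mu$-strongly convex function is $\frac{1}{\mu}$-smooth, applied with $\mu = \rho$. As a self-contained alternative I would differentiate the explicit expression $f_\rho^*(s) = \frac{1}{\rho}\log(1+\rho e^s) - 1$ twice, obtaining $(f_\rho^*)'(s) = \frac{e^s}{1+\rho e^s}$ and
\begin{equation*}
    (f_\rho^*)''(s) = \frac{e^s}{(1+\rho e^s)^2}.
\end{equation*}
Substituting $v = \rho e^s > 0$ rewrites this as $\frac{1}{\rho}\cdot\frac{v}{(1+v)^2}$, and since $(1+v)^2 \ge 4v$ by the arithmetic–geometric mean inequality, the factor $\frac{v}{(1+v)^2}$ never exceeds $\frac14 \le 1$. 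Hence $(f_\rho^*)''(s) \le \frac{1}{\rho}$ for all $s \in \R$, which is the asserted $\frac{1}{\rho}$-smoothness.

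I do not expect a genuine obstacle here: the argument is a pair of scalar differentiations followed by one elementary inequality. The only point requiring a small amount of care is the behaviour of $f_\rho''$ near the endpoints of the domain, but both of its summands are positive and the bound $f_\rho'' > \rho$ holds uniformly on $(0,1/\rho)$, so no boundary pathology arises and the strong-convexity inequality extends to the closed domain by continuity. Consistency of the two constants is automatic, since the $\frac{1}{\rho}$-smoothness is precisely the dual counterpart of the $\rho$-strong convexity.
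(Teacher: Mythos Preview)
Your proof is correct. The paper does not spell out a proof of this lemma; it merely precedes the statement with the rewriting
\[
f_\rho(t) = \frac{1}{\rho}\bigl((\rho t)\log(\rho t) + (1-\rho t)\log(1-\rho t)\bigr) + 1 - t\log\rho,
\]
i.e.\ a rescaled binary entropy plus an affine term, and leaves the conclusion to the reader. Your direct second-derivative computation $f_\rho''(t)=\tfrac{1}{t}+\tfrac{\rho}{1-\rho t}$ is exactly what this rewriting yields after two differentiations (namely $\rho\,h''(\rho t)$ with $h(p)=p\log p+(1-p)\log(1-p)$), so the two routes are the same in substance. Your explicit treatment of $f_\rho^*$ via $(f_\rho^*)''(s)=\tfrac{1}{\rho}\cdot\tfrac{v}{(1+v)^2}\le\tfrac{1}{4\rho}$ even recovers the sharp constant $4\rho$ (respectively $\tfrac{1}{4\rho}$) that the paper's stated $\rho$ (respectively $\tfrac{1}{\rho}$) relaxes; the paper presumably chose the looser constants for simplicity.
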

The above properties are useful from the computational optimization viewpoint.
They imply, in particular, that the approximation preserves convexity and leads
to a stochastic subgradient oracle with controlled second moment. Let \(g(x,\theta)\in\partial_\theta L(x,\theta)\). 
Recall that
\[
    \frac{d}{dt}\log(1+e^t)
    =
    \frac{e^t}{1+e^t}
    \eqqcolon \sigma(t),
\]
hence for every
\((\theta,\alpha)\), the vector
\begin{equation}\label{eq:stoch_subgrad_G}
    g_\rho(x,\theta,\alpha)
    \eqset
    \begin{bmatrix}
        \rho^{-1}\sigma(L(x,\theta)-\alpha+\log\rho)\,g(x,\theta)\\[1mm]
        1-\rho^{-1}\sigma(L(x,\theta)-\alpha+\log\rho)
    \end{bmatrix}
\end{equation}
is an unbiased stochastic subgradient of \(G_\rho\), i.e.,
\[
    \E_{X\sim\mu}\bigl[g_\rho(X,\theta,\alpha)\bigr]
    \in
    \partial G_\rho(\theta,\alpha).
\]
Moreover, since \(0\le \sigma\le 1\), for \(0<\rho\le 1\),
\begin{equation}\label{eq:2nd_mom_bound}
    \|g_\rho(x,\theta,\alpha)\|^2
    \le
    \rho^{-2}\|g(x,\theta)\|^2+\rho^{-2}.
\end{equation}
Thus, a second-moment bound on the stochastic subgradients of \(L\)
implies a second-moment bound for the stochastic subgradients of
\(G_\rho\).

\begin{assumption}\label{asmp}
    \(L(x,\cdot)\) is convex for every \(x \in \cX\); 
    there exists a stochastic subgradient oracle \(g(X,\theta)\in \partial_\theta L(X,\theta)\) satisfying \(\E\|g(X,\theta)\|^2\le M^2\) for every \(\theta\in\Theta\); 
    numbers \(\underline{F}\) and \(U\) satisfy \(\underline{F} \leq F^\star \leq U\); \(\Theta^\star \eqset \arg\min_{\theta \in \Theta}F(\theta)\) is nonempty, and
    \[
        \hat{\varkappa} \eqset \sup_{\theta\in\Theta} \varkappa(L(\cdot,\theta);\mu) < \infty .
    \]
\end{assumption}
\begin{theorem}\label{thm:convergence}
    Let Assumption \ref{asmp} hold, and let \(\rho \in \left(0, \frac{1}{2\hat{\varkappa}}\right)\). 
    Suppose that a constant \(D_\star\) satisfies \(\operatorname{dist}(\theta_1,\Theta^\star)\le D_\star\). 
    Set
    \[
        \mathcal A:=[\underline F-2\hat{\varkappa}\rho,U], 
    \]
    and define
    \[
        R^2
        :=
        D_\star^2
        + (U-\underline F+2\rho\hat{\varkappa})^2 .
    \]
    After \(N\) iterations of projected stochastic subgradient method for
    \eqref{eq:parametric_problem}, initialized at
    \((\theta_1,\alpha_1)\in\Theta\times\mathcal A\),  with stepsize
    \[
        \eta=\frac{\rho R}{\sqrt{N(M^2+1)}},
    \]
    the averaged iterate
    \[
        \begin{bmatrix}
            \bar\theta_N\\
            \bar\alpha_N
        \end{bmatrix}
        :=
        \frac1N\sum_{k=1}^N
        \begin{bmatrix}
            \theta_k\\
            \alpha_k
        \end{bmatrix}
    \]
    satisfies
    \[
        \E[F(\bar\theta_N)-F^\star]
        \le
        \frac{R\sqrt{M^2+1}}{\rho\sqrt N}
        +
        2\rho\hat{\varkappa}.
    \]
\end{theorem}

\begin{remark}
    When \(L(x,\cdot)\) is Lipschitz-smooth and 
    bounded from below, then \(G_\rho\) is smooth on
    \(\Theta\times(-\infty,a]\) for every \(a\in\R\), see Proposition~\ref{prop:softplus_smooth}.
\end{remark}

\section{Applications}\label{sec:applications}
In this section we consider several particular applications involving the objective \eqref{def:F} and show numerically, that our general-purpose approach based on approximation \eqref{eq:F_rho_dual} leads to better performance of SGD-type algorithms than the baseline algorithms designed specifically for these applications.
The source code for all experiments is available at \url{https://github.com/egorgladin/logsumexp-approx}.

\begin{figure*}[t]
\centering
\includegraphics[width=.99\textwidth]{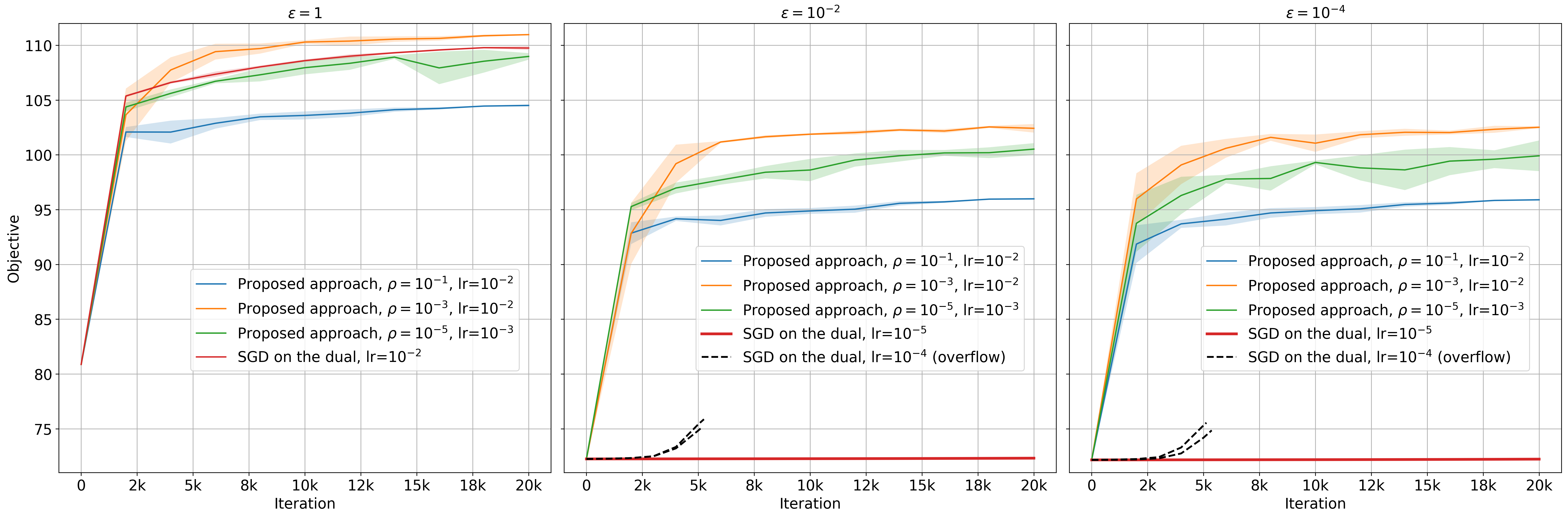}
\caption{Test-set eOT semi-dual objective vs. iteration for different regularization strengths $\varepsilon$ (left to right: $1$, $10^{-2}$, $10^{-4}$). Lines show the mean across 5 runs; shaded areas are $\pm$ one standard deviation. We compare LSOT (red) with our method (colored by $\rho$). Dashed black curves are examples where LSOT with lr=$10^{-4}$ terminates early due to overflow, while lr=$10^{-5}$ results in a prohibitively slow convergence (nearly horizontal red lines for $\varepsilon=10^{-2}, 10^{-4}$). Our proposed method remains stable and efficient for all $\varepsilon$.}
\label{fig:eot_experiment}
\vspace{-11pt}
\end{figure*}

\subsection{Continuous Entropy-Regularized OT}\label{subsec:eot}

The classical optimal transport (Monge--Kantorovich) problem consists in finding a coupling of two probability measures $\mu, \nu \in \Pspace(\cX)$ which minimizes the integral of a given measurable cost function $c \colon \cX \times \cX \to \R_+$ (e.g., a distance), i.e.,
\begin{equation*}
    W(\mu, \nu) \eqset \inf_{\pi \in \Pi(\mu, \nu)} \int c(x, y) \dd \pi(x, y) ,
\end{equation*}
where $\Pi(\mu, \nu) \subset \Pspace(\cX \times \cX)$ is the set of couplings (transport plans) of $\mu$ and $\nu$ \citep[see][]{kantorovich1942translocation,villani2008optimal,santambrogio_optimal_2015}.
For simplicity of demonstration, we assume that the measures are defined on the same space \(\cX\), but the results extend trivially to the case of two different spaces. 
Following \citet{cuturi2013sinkhorn}, we consider the entropy-regularized optimal transport (eOT) problem:
\begin{equation}\label{eq:EOT}
    \min_{\pi \in \Pi(\mu, \nu)} \int_{ {\cX \times \cX}} c(x, y) \dd \pi(x, y) + \varepsilon D_{KL}(\pi, \mu \otimes \nu) 
\end{equation}
where $\mu \otimes \nu$ is the product measure.
It is known that eOT admits the following dual and semi-dual formulations (see, e.g., \cite{genevay2016stochastic}):
\begin{align}
    W_{\varepsilon}(\mu, \nu) \nonumber
    &= \underbrace{\max_{u,v \in \mathcal{C}(\mathcal{X})} \iint_{ {\cX \times \cX}} f_{\varepsilon}(x, y, u, v) \dd\mu(x)\dd\nu(y)}_{\text{dual}} \nonumber\\
    &= \underbrace{\max_{v \in \mathcal{C}(\cX)} \int_{ {\cX}} h_{\varepsilon}(x, v)\dd\mu(x)}_{\text{semi-dual}}, 
\end{align}
where
\begin{align}
    f_{\varepsilon}(x, y, u, v) &\eqset u(x)+v(y)- \nonumber \\
    &-\varepsilon \exp \left(\frac{u(x)+v(y)-c(x, y)}{\varepsilon}\right), \label{eq:f_eps} \\
    h_{\varepsilon}(x, v) 
    &\eqset \int_{ {\cX}} v(y) \mathrm{d} \nu(y) - \varepsilon - \nonumber\\
    - \varepsilon \log & \left(\int_{ {\cX}} \exp \left(\frac{v(y)-c(x, y)}{\varepsilon}\right) \mathrm{d} \nu(y)\right), \label{eq:h_eps}
\end{align}
and \(\varepsilon>0\) is the regularization coefficient.
 {In the LSOT framework \citep{seguy2018large}, the potentials $u$ and $v$ are parameterized by neural networks and optimized via SGD. 
While Appendix \ref{subsec:eot_literature} contains a more detailed literature review, we briefly position LSOT among other solvers to motivate its selection as a baseline.
LSOT offers two key advantages relevant to our goals: it is \textbf{less computationally intensive} than modern solvers requiring adversarial training \citep{korotin2023neural,gushchin2023entropic,asadulaev2024neural} or iterative Langevin dynamics \citep{mokrov2024energyguided}, and it supports a \textbf{general cost function}---contrary to other efficient solvers like \citep{korotin2024light} tailored to the quadratic cost.
Therefore, to solve eOT with a general cost function under modest computational constraints, we adopt the LSOT framework as our primary baseline. In Appendix \ref{subsec:eot_rkhs} we also compare to \citet{genevay2016stochastic}, who use an RKHS parametrization for the potentials $u$ and $v$.}
\begin{remark}
  [The overflow issue]
  The main drawback of this approach is the presence of the exponent in the dual objective (and consequently in the SGD updates).
  Specifically, exponents are prone to floating-point exceptions \citep{goldberg1991every}, especially if the regularization parameter \( \varepsilon \) is relatively small, which is often the case. For example, if \( \varepsilon=0.01 \) and \( z \geq 7.1 \), then \(e^{z/\varepsilon}\) exceeds the representable range of a \textit{double-precision} (float64) floating-point number --- an \textit{overflow} happens. When single precision (float32) is used, an overflow happens even for \(z\geq 0.89\).
  \label{remark:overflow_issue}
\end{remark}
  
\paragraph{Our Approach.}
If we consider instead the semi-dual formulation and use the approximation \eqref{eq:F_rho_dual}, we get the problem
\begin{equation}\label{eq:approx_eot}
    \max_{v,\alpha \in \mathcal{C}(\cX)} \,
    \iint_{ {\cX \times \cX}}\,  \tilde{h}_{\varepsilon}(x, y, v, \alpha) \dd\mu(x)\dd\nu(y)
\end{equation}
with
\begin{multline}\label{eq:h_tilde_eps}
    \tilde{h}_{\varepsilon}(x, y, v, \alpha) \eqset v(y) - \alpha(x)-\\
    - \frac{\varepsilon}{\rho} \log \bigl(1 + \rho e^{(v(y) - c(x,y) - \alpha(x)) / \varepsilon} \bigr) - \varepsilon ,
\end{multline}
which also admits neural network parameterization and optimization via SGD.
One can show, in the same way as in \citet{genevay2016stochastic}, that this corresponds to the regularized OT problem \eqref{eq:EOT} with the \textit{Safe KL divergence} $D_\rho$ rather than the usual KL, i.e.
\begin{equation*}
    \min_{\pi \in \Pi(\mu, \nu)} \int_{ {\cX \times \cX}} c(x, y) \dd \pi(x, y) + \varepsilon D_{\rho}(\pi, \mu \otimes \nu) .
\end{equation*}
Note that this problem, in turn, can be viewed as a combination of the entropy-regularized and the capacity-constrained optimal transport.
For \( \rho > 0 \), this approach is much more stable than the previous one when used in SGD. We illustrate this in the following experiments.

\paragraph{Experiments.}
 {We consider the MNIST \citep{deng2012mnist} and EMNIST-letters \citep{cohen2017emnist} datasets as samples from the distributions $\mu$ (digits) and $\nu$ (letters).
Manhattan distance $\ell_1$ is chosen as the cost function for computing eOT between $\mu$ and $\nu$.
We parameterize the functions $u$, $v$ in LSOT and $v$, $\alpha$ in our proposed approach using a multilayer perceptron with two hidden layers (dimensions 256 and 128) and ReLU activations. The batch size is 256, and the learning rate is selected via grid search over \( \{10^{-6}, 10^{-5}, \ldots, 10^{-1}\} \). The objective is evaluated on the empirical distributions of the dedicated test sets.

Figure \ref{fig:eot_experiment} shows the performance of LSOT with the best learning rate for each regularization parameter \( \varepsilon \in \{1, 10^{-2}, \ldots, 10^{-4}\} \). It also depicts our proposed approach with the best learning rate for each $\rho \in \{10^{-1}, 10^{-3}, 10^{-5}\}$.
The baseline performs adequately under strong regularization ($\varepsilon=1$). However, for weaker regularization, a learning rate of $10^{-5}$ is required to avoid numerical instability, which leads to prohibitively slow progress (red curves). Increasing the rate to $10^{-4}$ (dashed black curves) results in numerical overflow after only $\approx$5k iterations, forcing us to abort the LSOT runs at that point. 

The performance of our proposed approach aligns with the theoretical analysis in Section \ref{sec:approx}. A large $\rho$ yields stable convergence but introduces an approximation gap, while a very small $\rho$ degrades smoothness, necessitating a smaller step size and slower training. The intermediate value $\rho=10^{-3}$ achieves the best trade-off, providing both accuracy and sufficient smoothness. In summary, our proposed approach to eOT is computationally efficient, accommodates general costs, and handles weak regularization robustly, thereby overcoming a key limitation of LSOT.}

\subsection{DRO with KL Divergence}

\begin{table*}[t]
\centering
\small
\setlength{\tabcolsep}{3pt}
\caption{ {Objective value \eqref{eq:kl_dro_relaxed} (mean $\pm$ std across 10 runs) at epoch 50 for baseline \eqref{eq:batch_grad} \citep{levy2020large} and proposed gradient estimator \eqref{eq:grad_appr_dro} with different $\rho$ values. Results are shown for various penalty coefficients $\lambda$ and batch sizes $|D|$, with optimal learning rates selected from $\{10^{-9}, \dots, 10^{-4}\}$. Best results per column are shown in bold.}}
\label{tab:results}
\begin{tabular}{l|ccc|ccc|ccc}
\toprule
  & \multicolumn{3}{c|}{$\lambda=1/5$} & \multicolumn{3}{c|}{$\lambda=1$} & \multicolumn{3}{c}{$\lambda=5$} \\
\cline{2-4} \cline{5-7} \cline{8-10}
Approach & $|D|\!=\!10$ & $|D|\!=\!10^{2}$ & $|D|\!=\!10^{3}$ & $|D|\!=\!10$ & $|D|\!=\!10^{2}$ & $|D|\!=\!10^{3}$ & $|D|\!=\!10$ & $|D|\!=\!10^{2}$ & $|D|\!=\!10^{3}$ \\
\midrule
Baseline \eqref{eq:batch_grad} & $26.9\!\pm\!0.7$ & $15.6\!\pm\!6.0$ & $\bm{9.1\!\pm\!4.6}$ & $20.0\!\pm\!0.9$ & $5.2\!\pm\!2.9$ & $\bm{2.3\!\pm\!0.2}$ & $0.87\!\pm\!0.01$ & $0.88\!\pm\!0.00$ & $0.79\!\pm\!0.00$ \\
\hline
\eqref{eq:grad_appr_dro}, $\rho\!=\!10^{-1}$ & $27.7\!\pm\!0.6$ & $27.7\!\pm\!0.7$ & $40.1\!\pm\!0.5$ & $21.1\!\pm\!1.1$ & $21.3\!\pm\!1.1$ & $21.8\!\pm\!2.1$ & $0.87\!\pm\!0.01$ & $0.87\!\pm\!0.01$ & $0.88\!\pm\!0.02$ \\
\hline
\eqref{eq:grad_appr_dro}, $\rho\!=\!10^{-3}$ & $21.2\!\pm\!9.8$ & $18.6\!\pm\!7.7$ & $25.3\!\pm\!0.1$ & $\bm{2.1\!\pm\!0.0}$ & $\bm{2.1\!\pm\!0.0}$ & $\bm{2.5\!\pm\!1.2}$ & $\bm{0.76\!\pm\!0.02}$ & $\bm{0.78\!\pm\!0.00}$ & $\bm{0.78\!\pm\!0.00}$ \\
\hline
\eqref{eq:grad_appr_dro}, $\rho\!=\!10^{-5}$ & $19.2\!\pm\!9.6$ & $17.5\!\pm\!6.6$ & $24.3\!\pm\!0.3$ & $3.0\!\pm\!0.0$ & $3.0\!\pm\!0.0$ & $3.0\!\pm\!0.0$ & $1.03\!\pm\!0.00$ & $1.03\!\pm\!0.00$ & $1.03\!\pm\!0.00$ \\
\bottomrule
\end{tabular}
\end{table*}

One of the approaches to training a model that is robust to data distribution shifts and noisy observations
is called Distributionally Robust Optimization (DRO)~\citep{kuhnDistributionallyRobustOptimization2024}.  
In contrast to the standard Empirical Risk Minimization (ERM) approach, which minimizes the average loss on the training sample, DRO minimizes the risk for the worst-case distribution among those close to a reference measure (e.g., empirical distribution). A prominent example is KL divergence DRO~\citep{huKullbackLeiblerDivergenceConstrained}, which is formulated as the saddle-point problem
\begin{equation}\label{eq:kldro_sth}
    \min_{\theta \in \Theta}\, \max_{p \in \Delta^n}\; \sum_{i=1}^n p_i \ell_i(\theta) - \lambda D_{KL}(p, \hat{p}),
\end{equation}
where $\theta \in \Theta$ is the model parameter vector, $\ell_i(\theta)$ is the respective loss on the $i$-th training example, $\Delta^n$ is the unit simplex in $\R^n$, $\hat{p} \in \Delta^n$ is the weight vector defining the empirical distribution (typically $\hat{p} = \frac{1}{n}\mathbf{1}$), and $D_{KL}$ is the Kullback--Leibler divergence
which discourages distributions that are too far from the empirical one,
$\lambda>0$ is the penalty coefficient.  
For fixed $\theta$, the solution of the maximization problem is given by  
$p_i^*(\theta) \eqset \frac{e^{\ell_i(\theta) / \lambda}}{\sum_j e^{\ell_j(\theta) / \lambda}}$,  
which reduces the problem to
\begin{equation}\label{eq:kl_dro_relaxed}
    \min_{\theta \in \Theta} \mathcal{L}(\theta) \eqset \lambda \log \Bigl( \frac{1}{n} \sum_{i=1}^n e^{\ell_i(\theta) / \lambda} \Bigr) .
\end{equation}
However, when $n$ is large, computing the full gradient $\nabla \mathcal{L}(\theta) = \sum_{i=1}^n p_i^*(\theta) \nabla \ell_i(\theta)$ becomes costly.
A straightforward approach \citep{levy2020large} is to sample a batch $D$, compute the respective softmax weights
\[
p_i^D(\theta) \eqset \frac{e^{\ell_i(\theta) / \lambda}}{\sum_{j\in D} e^{\ell_j(\theta) / \lambda}},
\]
and define a gradient estimator by
\begin{equation}\label{eq:batch_grad}
    \tilde{\nabla}_D \mathcal{L}(\theta) = \sum_{i\in D} p_i^D(\theta) \nabla \ell_i(\theta).
\end{equation}
However, this introduces a bias and requires using large batch sizes to keep it sufficiently small.

\paragraph{Our Approach.}
Instead, we propose to use the approximation \eqref{eq:F_rho_dual}, which results in the problem
\begin{equation}\label{eq:appr_kl_dro}
    \min_{\substack{\theta \in \Theta \\ \alpha \in \R}}\, G(\theta, \alpha) \eqset \frac{1}{n} \sum_{i=1}^n \Bigl\{ \alpha + \frac{\lambda}{\rho} \log \bigl(1 + \rho e^{(\ell_i(\theta) - \alpha) / \lambda} \bigr) \Bigr\}.
\end{equation}
Like in the previous subsection, this can be interpreted as switching from $D_{KL}$ penalty in \eqref{eq:kldro_sth} to \textit{Safe KL} $D_{\rho}$.
The respective gradient estimators are
\begin{equation}\label{eq:grad_appr_dro}
\begin{aligned}
    \tilde{\nabla}^D_{\theta} G(\theta, \alpha) &\eqset \frac{1}{|D|} \sum_{i\in D} \sigma_{\rho} \left( \textstyle\frac{\ell_i(\theta) - \alpha}{\lambda} \right) \nabla \ell_i(\theta), \\ 
    \tilde{\nabla}^D_{\alpha} G(\theta, \alpha) &\eqset 1 - \frac{1}{|D|} \sum_{i\in D} \sigma_{\rho} \left( \textstyle\frac{\ell_i(\theta) - \alpha}{\lambda} \right).
\end{aligned}
\end{equation}

\paragraph{Experiments.}
Consider the California housing dataset \citep{pace1997sparse} consisting of 20,640 objects represented by 8 features. Let $\ell_i$ be the squared error of a linear model, $\ell_i(\theta) = (y_i - \theta^\top x_i)^2$. We use accelerated SGD with the gradient estimator \eqref{eq:batch_grad} \citep{levy2020large} as the baseline approach for solving \eqref{eq:kl_dro_relaxed}, and compare it to our proposed gradient estimator \eqref{eq:grad_appr_dro}.  {We consider various values of the penalty coefficient $\lambda \in \{1/5, 1, 5\}$ and batch sizes $|D| \in \{10, 10^2, 10^3\}$. For each configuration, we select the optimal learning rate from $\{10^{-9}, 10^{-8}, \ldots, 10^{-4}\}$. The approximation accuracy parameter $\rho$ in our method is varied across $\{10^{-1}, 10^{-3}, 10^{-5}\}$. Momentum is fixed at 0.9 (without tuning), and the least squares solution is used as the initial point for optimization.}

Numerical results are presented in Table~\ref{tab:results}, showing the objective value (mean $\pm$ standard deviation across 10 runs) after 50 epochs, where the methods typically reach a plateau. In each column, the best-performing configurations are highlighted in bold. For $\lambda=1/5, |D| \in \{10, 10^2\}$, no results are displayed in bold as all configurations perform similarly. As seen from the table, the baseline and our estimator achieve comparable performance for large batch sizes ($|D|=10^3$). However, for smaller batches, our method typically outperforms the baseline. Both approaches handle various $\lambda$ values well, with the exception of the baseline method combined with small batch sizes.

Regarding the approximation parameter $\rho$, large values ($\rho=10^{-1}$) generally result in a noticeable approximation gap, while excessively small values ($\rho=10^{-5}$) deteriorate the smoothness of the objective and consequently slow convergence. The intermediate value $\rho=10^{-3}$ thus provides the best trade-off in this experiment, offering both good approximation accuracy and favorable optimization properties.

Additional KL-DRO results are reported in Appendix~\ref{app:kl_dro}. There, we evaluate the methods on an income-prediction task based on ACS PUMS data under an imbalanced train--test split across states, which creates a distribution shift between training and testing populations. The results show that the lower DRO objective values achieved by the proposed estimator are also reflected in competitive or improved regression metrics, including worst-group RMSE and MAE.

\subsection{DRO with Unbalanced OT} 

\begin{table*}[t]
\centering
\scriptsize
\setlength{\tabcolsep}{3pt}
\caption{Objective value \eqref{eq:lse_dro} (mean $\pm$ std across 5 runs) at epoch 20 for the baseline \citep{wang2024outlier} and the proposed approach (with different $\rho$ values), i.e., SGD on \eqref{eq:sumexp} and \eqref{eq:uot_dro_approx}, respectively. Results are shown for various penalty coefficients $\gamma$ and $\lambda$, with optimal learning rates selected from $\{10^{-9}, 10^{-8}, \dots, 10 \}$. Best results per column are shown in bold.}
\label{tab:results_2}
\begin{tabular}{l|ccc|ccc|ccc}
\toprule
  & \multicolumn{3}{c|}{$\gamma=1/5$} & \multicolumn{3}{c|}{$\gamma=1$} & \multicolumn{3}{c}{$\gamma=5$} \\
\cline{2-4} \cline{5-7} \cline{8-10}
Approach & $\lambda=1/5$ & $\lambda=1$ & $\lambda=5$ & $\lambda=1/5$ & $\lambda=1$ & $\lambda=5$ & $\lambda=1/5$ & $\lambda=1$ & $\lambda=5$ \\
\midrule
Baseline \eqref{eq:sumexp} & $1.79\!\pm\!0.01$ & $1.78\!\pm\!0.01$ & $1.77\!\pm\!0.01$ & $0.86\!\pm\!0.01$ & $0.81\!\pm\!0.01$ & $0.79\!\pm\!0.01$ & $0.33\!\pm\!0.06$ & $0.22\!\pm\!0.08$ & $0.17\!\pm\!0.02$ \\
\hline
\eqref{eq:uot_dro_approx}, $\rho\!=\!1$ & $4.76\!\pm\!0.68$ & $5.23\!\pm\!0.52$ & $6.04\!\pm\!0.79$ & $0.50\!\pm\!0.14$ & $0.42\!\pm\!0.40$ & $1.53\!\pm\!2.23$ & $0.23\!\pm\!0.03$ & $0.18\!\pm\!0.10$ & $\bm{0.09\!\pm\!0.03}$ \\
\hline
\eqref{eq:uot_dro_approx}, $\rho\!=\!10^{-1}$ & $1.57\!\pm\!0.16$ & $1.29\!\pm\!0.30$ & $1.93\!\pm\!0.92$ & $\bm{0.45\!\pm\!0.03}$ & $\bm{0.37\!\pm\!0.07}$ & $\bm{0.31\!\pm\!0.04}$ & $\bm{0.21\!\pm\!0.02}$ & $0.16\!\pm\!0.09$ & $0.12\!\pm\!0.06$ \\
\hline
\eqref{eq:uot_dro_approx}, $\rho\!=\!10^{-2}$ & $\bm{1.35\!\pm\!0.04}$ & $\bm{1.28\!\pm\!0.05}$ & $\bm{1.25\!\pm\!0.05}$ & $0.50\!\pm\!0.05$ & $0.50\!\pm\!0.11$ & $0.48\!\pm\!0.02$ & $\bm{0.21\!\pm\!0.03}$ & $\bm{0.12\!\pm\!0.03}$ & $0.26\!\pm\!0.04$ \\
\bottomrule
\end{tabular}
\end{table*}

In the KL divergence DRO described in the previous subsection,
uncertainty set is limited to distributions with the same support as the empirical measure \(\mu=\frac{1}{n} \sum_i \delta_{x_i} \).
Another popular approach, Wasserstein DRO (WDRO) \citep{mohajerinesfahaniDatadrivenDistributionallyRobust2018,sinhaCertifyingDistributionalRobustness2020},
considers the worst-case risk over shifts within a Wasserstein (OT) ball around a reference measure \(\mu\)
instead of the KL-ball in \eqref{eq:kldro_sth},
thus including continuous probability measures.
Unfortunately, this approach is not resilient to outliers that are geometrically far from the clean distribution since
OT metric is sensitive to them \citep{nietert2023outlier}.
A natural generalization is to switch to semi-balanced OT~\citep{liero_optimal_2018,chizat_unbalanced_2019,kondratyevNewOptimalTransport2016}, which replaces a hard constraint on one of the marginals with a mismatch penalty function, e.g.,
\[
    W_\beta(\nu, \mu) = \hspace{-1em}\inf_{\substack{\pi \in \Pspace(\cX \times \cX) \\ \pi_1=\nu}} \int_{ {\cX \times \cX}} \hspace{-1em}c(x, z) \dd \pi(x,z) + \beta\, D_{KL}(\pi_2, \mu),
\]
where \(\pi_1\) and \(\pi_2\) are first and second marginals of \(\pi\), respectively,
\(\beta>0\) is the marginal penalty parameter. Intuitively, this discrepancy measure allows to ignore some points (e.g., outliers) by paying a small price for mismatch in marginals.
The (penalty-form) DRO problem can be written as
$$
    \min_{\theta \in \Theta}\, \max_{\nu \in \mathcal P(\cX)}\; \int_{ {\cX}} \ell(\theta, x)\dd \nu (x) - \lambda W_\beta(\nu, \mu),
$$
where $\lambda >0$ is the Lagrangian penalty parameter.
By standard duality,
\citet{wang2024outlier} showed that when
\(\mu=\frac{1}{n} \sum_i \delta_{x_i} \) is the empirical distribution,
this is equivalent to
\begin{align}
    &\min_{\theta \in \Theta} F(\theta) \eqset \gamma\, \log \Bigl( \frac{1}{n} \sum_{i=1}^n e^{\hat{\ell}_i(\theta) / \gamma} \Bigr)   \nonumber\\
    &\text{with }\; \hat{\ell}_i(\theta) \eqset \sup_{z \in \cX} \{ \ell (\theta; z) - \lambda c(z, x_i)\}, \label{eq:lse_dro}
\end{align}
where $\gamma \eqset \lambda \beta$.
To avoid the costly gradient computation of LogSumExp, the authors 
drop the logarithm and use SGD to optimize the sum of exponents,
\begin{equation}\label{eq:sumexp}
    \min_{\theta \in \Theta} \frac{1}{n} \sum_{i=1}^n e^{\hat{\ell}_i(\theta) / \gamma}.
\end{equation}
The major downside of this approach is that the exponent terms have a large variance, and SGD is prone to floating-point exceptions (overflow)
unless a very small stepsize is tuned, which slows down the convergence and can be time-consuming and unstable in practice.

\paragraph{Our Approach.}
To overcome this issue, we propose leveraging the approximation \eqref{eq:F_rho_dual}, which leads to the problem
\begin{equation}\label{eq:uot_dro_approx}
    \min_{\substack{\theta \in \Theta \\ \alpha \in \R}}\, \frac{1}{n} \sum_{i=1}^n \Bigl\{ \alpha + \frac{\gamma}{\rho} \log \bigl(1 + \rho e^{(\hat{\ell}_i(\theta) - \alpha) / \gamma} \bigr) \Bigr\},
\end{equation}
where \( \rho >0 \) is a parameter controlling the accuracy of the approximation. This approximation can be efficiently optimized with SGD.
Note that our method can also be applied to other DRO algorithms such as Sinkhorn DRO~\citep{wang2021sinkhorn}, which we omit to avoid redundancy.

\paragraph{Experiments.}
We consider MNIST dataset \citep{deng2012mnist} with labels corrupted by feature-dependent noise \citep[see][]{algan2020label} (noise ratio 25\%).
Let $\theta$ denote weights of a CNN with two convolutional layers (32 and 64 channels, kernel size 3, ReLU activations, and 2$\times$2 max pooling), followed by a fully connected classifier with one hidden layer of 128 units, and let $\ell(\theta; z)$ be its cross entropy loss on object $z$.
In the experiment,
SGD with batch size 32 is applied to problems \eqref{eq:sumexp} (baseline) and \eqref{eq:uot_dro_approx} (proposed approach).
We consider values of the stepsize $\eta \in \{10^{-9}, 10^{-8}, \ldots, 10\}$.
For the inner maximization problem in \eqref{eq:lse_dro}, just 5 iterations of Nesterov's accelerated gradient method were sufficient to reach plateau in terms of the objective value.

\begin{figure}[ht]
  \vskip 0.2in
  \begin{center}
    \centerline{\includegraphics[width=\columnwidth]{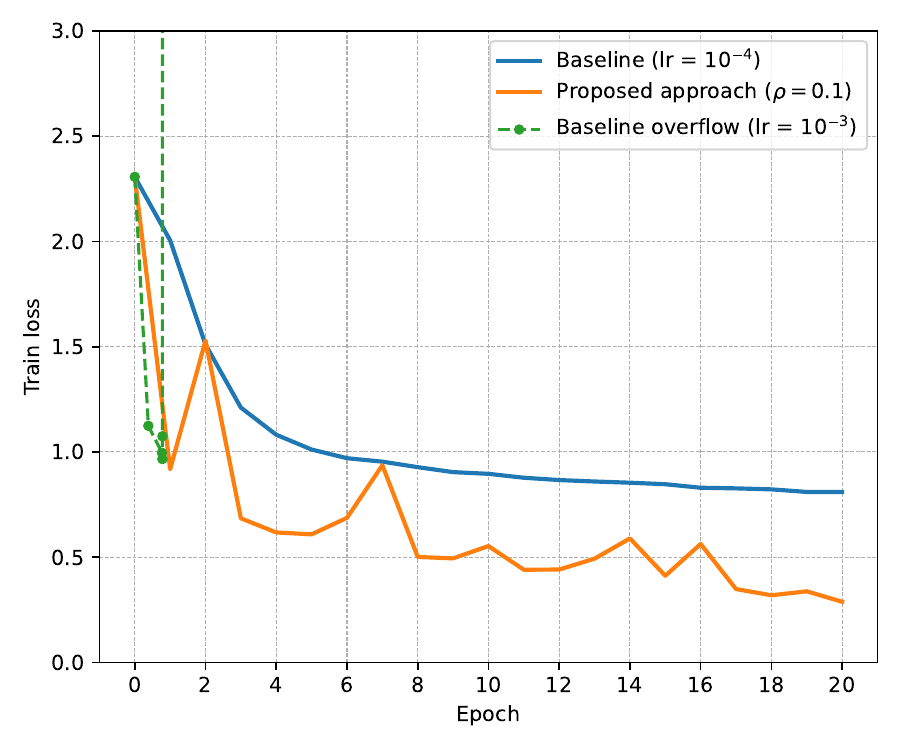}}
    \caption{Example trajectories of the baseline with $\text{lr}=10^{-4}$ (blue) and the proposed approach with $\rho=0.1$, $\text{lr}=10^{-2}$ (orange). Y-axis corresponds to the objective \eqref{eq:lse_dro} with $\lambda = \gamma = 1$. Green dashed curve illustrates that the baseline diverges during the first epoch even for a relatively small $\text{lr}=10^{-3}$.
    }
    \label{fig:uot_dro}
  \end{center}
\end{figure}

Table~\ref{tab:results_2} presents the final objective values (mean $\pm$ standard deviation across 5 runs) after 20 epochs, at which point the methods typically stop making significant progress.
The best-performing configuration in each column is highlighted in bold.
The table shows that the proposed approach consistently outperforms the baseline. 
For the approximation parameter $\rho$, a large value ($\rho=1$) often introduces a noticeable approximation gap. An exception occurs when the initial objective is sufficiently smooth, as with $\gamma=5$.
The value $\rho=10^{-1}$ offers robust performance across most scenarios, whereas $\rho=10^{-2}$ is better suited for ill-conditioned objectives (e.g., $\gamma=1/5$) that demand high approximation accuracy.

To illustrate the optimization behavior underlying the aggregated results in Table~\ref{tab:results_2}, Figure \ref{fig:uot_dro} shows example trajectories for a fixed seed and parameter set ($\lambda = \gamma = 1$). The proposed approach with $\rho=0.1$ (orange) minimizes the objective using a relatively large learning rate of $10^{-2}$. The baseline (blue), however, requires a learning rate of $10^{-4}$ to avoid numerical overflow, as demonstrated by its rapid divergence (green dashed curve) for $\text{lr}=10^{-3}$. Consequently, the proposed method achieves lower objective values, while the baseline's small learning rate results in slow progress.

Additional UOT-DRO results are reported in Appendix~\ref{app:uot_dro}. In that experiment, we increase the label-noise ratio to 85\%, which provides a more challenging setting and highlights that the improved objective values obtained by the proposed method can translate into better prediction metrics under severe label corruption.

\section{Conclusion}

We introduce a novel approximation to the log partition function, and in particular to LogSumExp, which arises in numerous applications across machine learning and optimization.
In the dual formulation, this approximation corresponds to the Safe KL divergence.
The proposed LogSumExp approximation preserves convexity and smoothness, admits unbiased stochastic gradients for the surrogate objective, and provides a controllable approximation bias independent of the batch size.
Our empirical results highlight its practical advantages across tasks in continuous entropy-regularized OT and DRO, especially in regimes where standard exponential-based formulations suffer from numerical instability.

The method also has limitations.
First, since Safe KL modifies the original KL divergence, application-specific structural properties of KL-based formulations may require separate analysis. 
Second, the parameter $\rho$ controls the bias--stability tradeoff and currently requires tuning; an adaptive procedure for selecting $\rho$ could further improve practicality. 
Finally, our experiments cover only a limited set of downstream tasks. 
Future work includes a deeper study of Safe KL formulations in specific applications and extensions to other settings where LogSumExp and KL duality play a central role.

\section*{Impact Statement}

This paper presents work whose goal is to advance the field of Machine
Learning. Our main goal is to make ML applications where LogSumExp minimization is needed less computationally demanding. 
There are many potential societal consequences of our work, yet we don't see any direct consequences that we feel must be specifically highlighted here.

\section*{Acknowledgements}

The work of Egor Gladin was supported by the grant for research centers in the field of AI provided by the Ministry of Economic Development of the Russian Federation in accordance with the agreement 
000000C313925P4E0002 
and the agreement with HSE University 
\textnumero 139-15-2025-009





\bibliography{references}
\bibliographystyle{icml2026}

\newpage
\appendix
\onecolumn
\section{Proofs for Section~\ref{sec:approx}}\label{app:proofs}

\begin{proof}[Proof of Proposition~\ref{prop:approx}]
    \ref{monot},\ref{lim} Consider the function $g(t) \eqset \frac{\log(1 + t)}{t}$. 
    It is decreasing and convex on $(0, \infty)$, $g(t) \to 1$ and $g'(t) \to - \frac{1}{2}$ as $t \to 0+$.
    Note that
    \[
    F_\rho(\varphi ; \mu) = \inf_{\alpha \in \R} \alpha - 1 + \int_{\cX} e^{\varphi(x) - \alpha} g\left(\rho e^{\varphi(x) - \alpha}\right) \dd \mu(x) .
    \]
    Then~\ref{monot} follows immediately from~\eqref{eq:F_rho_dual} and the monotonicity of $g$. 
    The monotone convergence theorem yields~\ref{lim} since
    \[
    F(\varphi; \mu) = \inf_{\alpha \in \R} \alpha - 1 + \int_{ {\cX}} e^{\varphi(x) - \alpha} \dd \mu(x) .
    \]

    Now, let us prove~\ref{alpha_and_lower_bounds}. Consider the optimal $\alpha_{\rho} = \alpha_\rho(\varphi;\mu)$ satisfying~\eqref{eq:opt_alpha_rho}. 
    By Jensen's inequality
    \begin{align*}
        \int_{ {\cX}} \log\left(1 + \rho e^{\varphi(x) - \alpha_\rho}\right) \dd \mu(x) 
        &= - \int_{ {\cX}} \log\left(1 - \frac{\rho e^{\varphi(x) - \alpha_\rho}}{1 + \rho e^{\varphi(x) - \alpha_\rho}}\right) \dd \mu(x) \\
        &\ge - \log\left(1 - \int_{ {\cX}} \frac{\rho e^{\varphi(x) - \alpha_\rho}}{1 + \rho e^{\varphi(x) - \alpha_\rho}} \dd \mu(x)\right) = - \log(1 - \rho) ,
    \end{align*}
    thus
    \begin{equation}\label{eq:lower_1}
        F_\rho(\varphi; \mu) = \alpha_\rho - 1 + \frac{1}{\rho} \int_{ {\cX}} \log\left(1 + \rho e^{\varphi(x) - \alpha_\rho}\right) \dd \mu(x) 
        \ge \alpha_\rho - 1 - \frac{\log(1 - \rho)}{\rho} 
        \ge \alpha_\rho + \frac{\rho}{2} .
    \end{equation}
    It remains to get a lower bound on $\alpha_\rho$. 
    By the monotonicity of $\frac{t}{1+t}$ we deduce that $\alpha_\rho \ge \alpha$ for any $\alpha$ such that
    \begin{equation}\label{eq:smaller_alpha}
        \int_{ {\cX}} \frac{\rho e^{\varphi(x) - \alpha}}{1 + \rho e^{\varphi(x) - \alpha}} \dd \mu(x) \ge \rho .
    \end{equation}
    Recall that if $X,Y$ are random variables and $Y>0$, then $\E(X^2/Y) \geq (\E X)^2 / (\E Y)$. Therefore,
    \begin{equation*}
        \int_{ {\cX}} \frac{e^{\varphi(x) - \alpha}}{1 + \rho e^{\varphi(x) - \alpha}} \dd \mu(x) 
        = \int_{ {\cX}} \frac{e^{2\varphi(x) - 2\alpha}}{e^{\varphi(x) - \alpha} + \rho e^{2\varphi(x) - 2\alpha}} \dd \mu(x) \geq \frac{(\int e^{\varphi(x) - \alpha} \dd \mu)^2}{\int (e^{\varphi(x) - \alpha} + \rho e^{2\varphi(x) - 2\alpha}) \dd \mu} \\
        = \frac{u^2}{u+\rho e^{F(2\varphi) - 2\alpha}},
    \end{equation*}
    where \(u \eqset \int e^{\varphi(x) - \alpha} \dd \mu = e^{F(\varphi) - \alpha}\). Note that
    \[
        e^{F(2\varphi) - 2\alpha} = e^{F(2\varphi) - 2F(\varphi)}e^{2F(\varphi) - 2\alpha} = u^2 e^{F(2\varphi) - 2F(\varphi)}.
    \]
    Denote \(a := \rho e^{F(2\varphi) - 2F(\varphi)}\), then
    \begin{align*}
        \frac{u^2}{u+\rho e^{F(2\varphi) - 2\alpha}} = \frac{u^2}{u + u^2 a} = \frac{u}{1+ua}.
    \end{align*}
    If $a<1$, i.e., $\rho < e^{2F(\varphi) - F(2\varphi)}$, take \(\alpha\) such that \(u = \frac{1}{1-a}\), then
    \[
        \frac{u}{1+ua} = \frac{1}{(1-a)(1+\frac{a}{1-a})} = 1\; \Rightarrow\; \alpha \text{ fulfills } \eqref{eq:smaller_alpha}\; \Rightarrow\; \alpha_\rho \geq \alpha.
    \]
    Also note that \(u = \frac{1}{1-a} \iff - \log(1-a) = F(\varphi)-\alpha\), so we get
    \begin{equation}\label{lower_alph}
        \alpha_\rho \geq F(\varphi) + \log(1-a) \geq F(\varphi) + \log(1-\rho e^{F(2\varphi) - 2F(\varphi)}).
    \end{equation}
    Combining this with~\eqref{eq:lower_1} 
    and using \(F_\rho(\varphi; \mu) \leq F(\varphi; \mu)\), 
    we arrive at \eqref{eq:lower_bound}.

    \ref{lower_bound_with_max} Finally, let $\varphi(x) \le M$ for all $x \in \cX$. Then by concavity 
    \[
    \int_{ {\cX}} \log\left(1 + \rho e^{\varphi(x) - \alpha}\right) \dd \mu(x) 
    \ge \int_{ {\cX}} e^{\varphi(x) - M} \log\left(1 + \rho e^{M - \alpha}\right) \dd \mu(x)
    = e^{F(\varphi; \mu) - M} \log\left(1 + \rho e^{M - \alpha}\right)
    \]
    for all $\alpha \in \R$. Therefore,
    \begin{align*}
        F_\rho(\varphi ; \mu) &\ge \min_\alpha \alpha - 1 + \frac{e^{F(\varphi; \mu) - M}}{\rho} \log\left(1 + \rho e^{M - \alpha}\right) \\
        &= F(\varphi; \mu) - 1 - \frac{1 - \rho e^{M - F(\varphi; \mu)}}{\rho e^{M - F(\varphi; \mu)}} \log\left(1 - \rho e^{M - F(\varphi; \mu)}\right) \\
        &\ge F(\varphi; \mu) - \rho e^{M - F(\varphi; \mu)} .
    \end{align*}
    Here we used the inequality 
    \[
    \frac{1 - t}{t} \log(1 - t) \le t - 1, \quad 0 < t < 1 .
    \]
\end{proof}

\begin{proof}[Proof of Corollary~\ref{cor:logsumexp_approx}]
    Set $\mu_n \eqset \frac{1}{n} \sum_{i=1}^n \delta_{a_i} \in \Pspace(\R)$. 
    Then
    \[
    \operatorname{LogSumExp}(a_1, \dots, a_n) = \log n + \log\left(\int_{\R} e^x \dd \mu_n(x)\right) = \log n + F(id; \mu_n)
    \]
    and
    \begin{align*}
        \inf_{\alpha \in \R} \alpha - 1 + \frac{1}{\rho} \sum_{i=1}^n \log(1 + \rho e^{a_i - \alpha}) 
        &= \inf_{\alpha \in \R} \alpha - 1 + \frac{n}{\rho} \int_{\R} \log(1 + \rho e^{x - \alpha}) \dd \mu_n(x) \\
        &= \inf_{\alpha \in \R} \alpha - 1 + \frac{n}{\rho} \int_{\R} \log\left(1 + \frac{\rho}{n} e^{x - \alpha + \log n}\right) \dd \mu_n(x) \\
        &= \log n + F_{\rho/n}(id; \mu_n) .
    \end{align*}
    Since
    \[
    e^{F(id; \mu_n) - \max_i a_i} = \frac{\sum_{i=1}^n e^{a_i}}{n \max_i e^{a_i}} \ge \frac{1}{n} > \frac{\rho}{n},
    \]
    Proposition~\ref{prop:approx}\ref{monot} and \ref{lower_bound_with_max} yields
    \[
    F(id; \mu_n) - \rho \le F_{\rho/n}(id; \mu_n) \le F(id; \mu_n) .
    \]
    The claim follows.
\end{proof}

\begin{proof}[Proof of Proposition~\ref{prop:cvar}]
    As $\lambda \log(1 + e^{t/\lambda}) > t_+:=\max\{0,t\}$, we get
    \begin{align*}
        \lambda F_\rho(\varphi / \lambda ; \mu) &\ge \lambda \inf_{\alpha \in \R} \alpha - 1 + \frac{1}{\rho} \int_{ {\cX}} \left(\log \rho + \frac{\varphi(x)}{\lambda} - \alpha\right)_+ \dd \mu(x) \\
        &= \lambda (\log \rho - 1) + \inf_{\alpha \in \R} \alpha + \frac{1}{\rho} \int_{ {\cX}} (\varphi(x) - \alpha)_+ \dd \mu(x) .
    \end{align*}
    The infimum in the r.h.s.\ is the variational formula for CVaR~\eqref{eq:cvar_var}, thus we get the first inequality in~\eqref{eq:lower_cvar}.
    The second inequality can be obtained in a similar way using that $\lambda \log(1 + e^{t/\lambda}) < t_+ + \lambda$.
\end{proof}

 {
\begin{proof}[Proof of Lemma~\ref{Lm:str_convexity}]
Recall that we have
\[
f_\rho(t)
= \frac{1}{\rho}\!\left((\rho t)\log(\rho t) + (1-\rho t)\log(1-\rho t)\right)
+ 1 - t\log\rho .
\]
Simplifying, we obtain
\[
f_\rho(t)
= t\log t + \frac{1}{\rho}(1-\rho t)\log(1-\rho t) + 1.
\]
{The first derivative is calculated as follows:}
\[
\frac{d}{dt}\big(t\log t\big)
= \log t + 1,
\qquad
\frac{d}{dt}\!\left(\frac{1}{\rho}(1-\rho t)\log(1-\rho t)\right)
= -\big(\log(1-\rho t) + 1\big),
\]
so
\[
f_\rho'(t)
= (\log t + 1) - \big(\log(1-\rho t) + 1\big)
= \log t - \log(1-\rho t)
= \log\!\left(\frac{t}{1-\rho t}\right).
\]
{The second derivative is calculated as follows:}
\[
\frac{d}{dt}\!\left(\log t\right) = \frac{1}{t},
\qquad
\frac{d}{dt}\!\left(\log(1-\rho t)\right)
= -\frac{\rho}{1-\rho t},
\]
thus
\[
f_\rho''(t)
= \frac{1}{t} + \frac{\rho}{1-\rho t}
= \frac{1}{t(1-\rho t)}.
\]
By symmetry, we can see that the minimum value of the second derivative is achieved at $t^*=\frac{1}{2\rho}$, and it is equal to $4\rho$. Thus, for all $t\in {\rm dom} f_\rho$, we have that $f_\rho''(t)\geq 4\rho >\rho$. Thus, by \citep[Theorem 2.1.11]{nesterov2018lectures}, $f_\rho$ is $\rho$-strongly convex. By \citep[Theorem 1]{Zhou2018FenchelDuality} this also implies that its conjugate function $f_\rho^*$ is $\frac{1}{\rho}$-smooth.
\end{proof}
}

\begin{proof}[Proof of Theorem~\ref{thm:convergence}]
Projected stochastic subgradient method with stepsize $\eta$ satisfies \citep{nemirovski2009robust} for any feasible point $(\hat{\theta},\hat{\alpha}) \in\Theta\times\mathcal{A}$
\[
    \E[G_\rho(\bar{\theta}_N,\bar{\alpha}_N) - G_\rho(\hat{\theta},\hat{\alpha})] \leq \frac{\|(\theta_1,\alpha_1)-(\hat{\theta},\hat{\alpha})\|^2}{2\eta N} + \frac{M_\rho^2\eta}{2},
\]
where $M_\rho^2:=\frac{M^2 + 1}{\rho^2}$ is a uniform second-moment bound on the stochastic subgradient of $G_\rho$ due to \eqref{eq:2nd_mom_bound}. 
Since \(\operatorname{dist}(\theta_1,\Theta^\star)\le D_\star\), 
there exists \(\theta^\star\in\Theta^\star\) such that
\[
    \|\theta_1-\theta^\star\|\le D_\star .
\]
Let us take
\[
    (\hat\theta,\hat\alpha)
    :=
    (\theta^\star,\alpha_\rho(\theta^\star)),
    \qquad
    \alpha_\rho(\theta^\star)
    \in
    \arg\min_{\alpha\in\R}G_\rho(\theta^\star,\alpha).
\] 
Using the fact that 
\begin{equation}\label{log1ma}
    \log(1-a) \geq -2a\; \text{ for any }\; a\in(0, 1/2),
\end{equation}
we obtain from inequalities \eqref{eq:lower_1} and \eqref{lower_alph} 
\[
    \alpha_{\rho}(\theta^\star) \in [F(\theta^\star)-2\hat{\varkappa}\rho, F(\theta^\star)] \subseteq [\underline{F}-2\hat{\varkappa}\rho, U]=\mathcal{A},
\]
so $(\theta^\star,\alpha_{\rho}(\theta^\star))$ is indeed a feasible point. The distance to the initial point is bounded as follows:
\[ 
    \|(\theta_1,\alpha_1)-(\theta^\star,\alpha_\rho(\theta^\star))\|^2 \le
    D_\star^2
    + 
    (U-\underline F+2\rho\hat{\varkappa})^2 = R^2 . 
\]
Taking \(\eta:=\frac{R}{M_\rho \sqrt{N}}\) yields
\[
    \E[G_\rho(\bar{\theta}_N,\bar{\alpha}_N) - G_\rho(\theta^\star,\alpha_{\rho}(\theta^\star))] \leq \frac{R M_\rho}{\sqrt{N}}.
\]
Lastly, the inequality \eqref{log1ma} and Proposition \ref{prop:approx} \ref{alpha_and_lower_bounds} give
\begin{gather*}
    F(\bar{\theta}_N) - 2\hat{\varkappa}\rho \leq \min_{\alpha\in \R}G_\rho(\bar{\theta}_N, \alpha) \leq G_\rho(\bar{\theta}_N,\bar{\alpha}_N),\\
    F^\star \geq \min_{\alpha\in\R} G_\rho(\theta^\star,\alpha) = G_\rho(\theta^\star,\alpha_{\rho}(\theta^\star)), \\
    \E[ F(\bar{\theta}_N) - F^\star] \leq \E[G_\rho(\bar{\theta}_N,\bar{\alpha}_N) - G_\rho(\theta^\star,\alpha_{\rho}(\theta^\star))] + 2\hat{\varkappa}\rho \leq \frac{R M_\rho}{\sqrt{N}} + 2\hat{\varkappa}\rho,
\end{gather*}
and the statement of the theorem follows.
\end{proof}

\section{Additional Materials on Entropic OT}\label{appendix:densities}

\subsection{Related Works on eOT}\label{subsec:eot_literature}
This subsection provides an overview of selected works on continuous entropy-regularized optimal transport.
\citet{genevay2016stochastic} tackled this problem by introducing an RKHS and optimizing the dual function \eqref{eq:f_eps} with SGD. This approach was extended by \citet{seguy2018large}, who parameterized the dual potentials with neural networks instead of an RKHS to improve scalability. Subsequently, \citet{daniels2021score} leverage this approach to approximate the optimal transport plan, using it to develop a score-based generative model.
Although this direction mostly results in computationally efficient methods that work with a general cost function,
a key drawback is that small values of the regularization coefficient $\varepsilon$ cause numerical instabilities due to the exponential term in the dual objective; see Remark \ref{remark:overflow_issue}.
The work by \citep{korotin2023neural} studies a more general formulation known as \emph{weak OT}. The authors formulate it as a maximin problem and develop a neural-network-based algorithm under the assumption of a quadratic cost, a restriction that is later relaxed in \citep{asadulaev2024neural}. However, these methods are computationally intensive due to their adversarial training nature.
\citet{mokrov2024energyguided} approach eOT from the perspective of energy-based models. Unfortunately, the resulting solver is computationally expensive as it involves iterative Langevin dynamics.
Another popular approach to eOT in recent years is via the Schr\"odinger bridge (SB), e.g., \citep{gushchin2023entropic}. While SB-based solvers are also often computationally intensive, a more cost-efficient solution has been proposed by \citep{korotin2024light}. However, it relies on the quadratic cost assumption and does not support general cost. We would also like to note that a promising direction for future work is leveraging our approach for minimizing the objective (8) in \citep{korotin2024light} to further improve scalability.

\subsection{Experiment with RKHS Representation of Dual Potentials}\label{subsec:eot_rkhs}  

As mentioned earlier, LSOT \citep{seguy2018large} is inspired by the continuous eOT approach of \citet{genevay2016stochastic}. This work considers a reproducing kernel Hilbert space (RKHS) \(\cH\) defined on $\cX$, with a kernel $\kappa$, and applies SGD to solve the dual problem. This approach suffers from the same numerical instability as LSOT; see Remark \ref{remark:overflow_issue}. As an alternative, we again consider the approximation \eqref{eq:h_tilde_eps} of the semi-dual objective which can also be maximized by SGD. Although the variable $\alpha$ is, in general, a function of $x$, we empirically found that tuning a common scalar value $\alpha \in \R$ for all samples works well in the experiments described below.

Analytic form of SGD iterates for both objectives can be derived as follows. 
By the property of RKHS, if $u \in \cH$, then $u(x) = \langle u, \kappa(\cdot, x)\rangle_{\cH}$. Therefore, the derivatives of $f_{\varepsilon}$ take the form
\begin{align*}
    \nabla_u f_{\varepsilon}(x, y, u, v) &= \kappa(\cdot, x) - \exp \left(\frac{u(x)+v(y)-c(x, y)}{\varepsilon}\right) \kappa(\cdot, x), \\
    \nabla_v f_{\varepsilon}(x, y, u, v) &= \kappa(\cdot, y) - \exp \left(\frac{u(x)+v(y)-c(x, y)}{\varepsilon}\right) \kappa(\cdot, y).
\end{align*}
Consequently, SGD iterates for the dual objective \eqref{eq:f_eps} can be conveniently written as
\begin{align}
    \label{eq:kernlsgd}
    \left(u_k, v_k\right) &= \left(u_0, v_0\right) + \sum_{i=1}^k \beta_i\left(\kappa\left(\cdot, x_i\right), \kappa\left(\cdot, y_i\right)\right) \\
    \label{eq:stepsize}
    \text { with }\, \beta_i &\eqset \frac{C}{\sqrt{i}}\Bigl(1-e^{\frac{u_{i-1}\left(x_i\right)+v_{i-1}\left(y_i\right)-c\left(x_i, y_i\right)}{\varepsilon}}\Bigr),
\end{align}
where \((x_i, y_i)\) are i.i.d.\ samples from \( \mu \otimes \nu \), and
\( C>0 \) is the initial stepsize.
Similarly, SGD iterates for \eqref{eq:h_tilde_eps} are computed as follows:
\begin{align*}
    v_k &= v_0 + \sum_{i=1}^k \tilde{\beta}_i \kappa\left(\cdot, y_i\right), \\
    \alpha_k &= \alpha_0 - \sum_{i=1}^k \tilde{\beta}_i \quad
    \text {with }\; \beta_i \eqset \frac{C}{\sqrt{i}}\Bigl(1-\sigma_\rho \left(\textstyle \frac{u_{i-1}\left(x_i\right)+v_{i-1}\left(y_i\right)-c\left(x_i, y_i\right)}{\varepsilon}\right) \Bigr),
\end{align*}
where $\sigma_{\rho}(t) \eqset \frac{e^t}{1 + \rho e^t}$.

\begin{figure}
    \centering
    \includegraphics[width=0.5\linewidth]{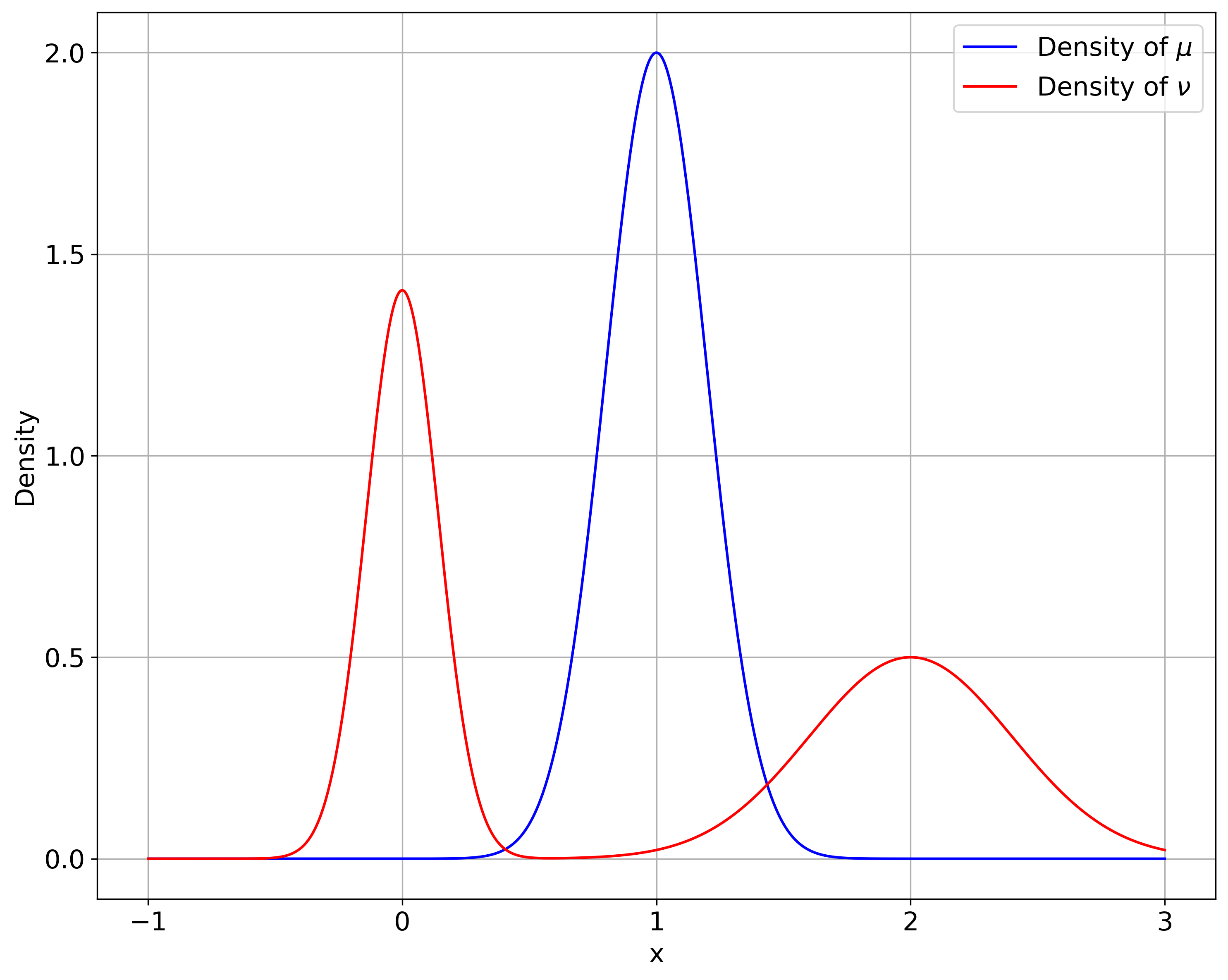}
    \caption{Densities of source and target distributions in the eOT experiment.}
    \label{fig:densities}
\end{figure}

\paragraph{Experiments.}
Consider a setup analogous to the one described in Section~5 of \citet{genevay2016stochastic}. Specifically, \(\mu\) is a 1D Gaussian, and \(\nu\) is a
mixture of two Gaussians (see Figure \ref{fig:densities} for a plot of densities).
Gaussian kernel \( \kappa(x,x')=\exp\left(-\frac{\|x-x'\|^2}{\sigma^2} \right) \) with a bandwidth hyperparameter \( \sigma^2 >0 \) is used.
The regularization coefficient is set to \(\varepsilon=0.01\).
We consider kernel SGD \eqref{eq:kernlsgd} applied to the dual objective as a \textit{baseline} approach \citep{genevay2016stochastic}.
We compare it to the proposed approach, namely, kernel SGD applied to the approximate semi-dual problem \eqref{eq:approx_eot}. For details on how the optimality gap is estimated, see Appendix \ref{appendix:densities}.

\begin{figure}[t]
\centering
\begin{subfigure}{.32\textwidth}
  \centering
  \includegraphics[width=.99\linewidth]{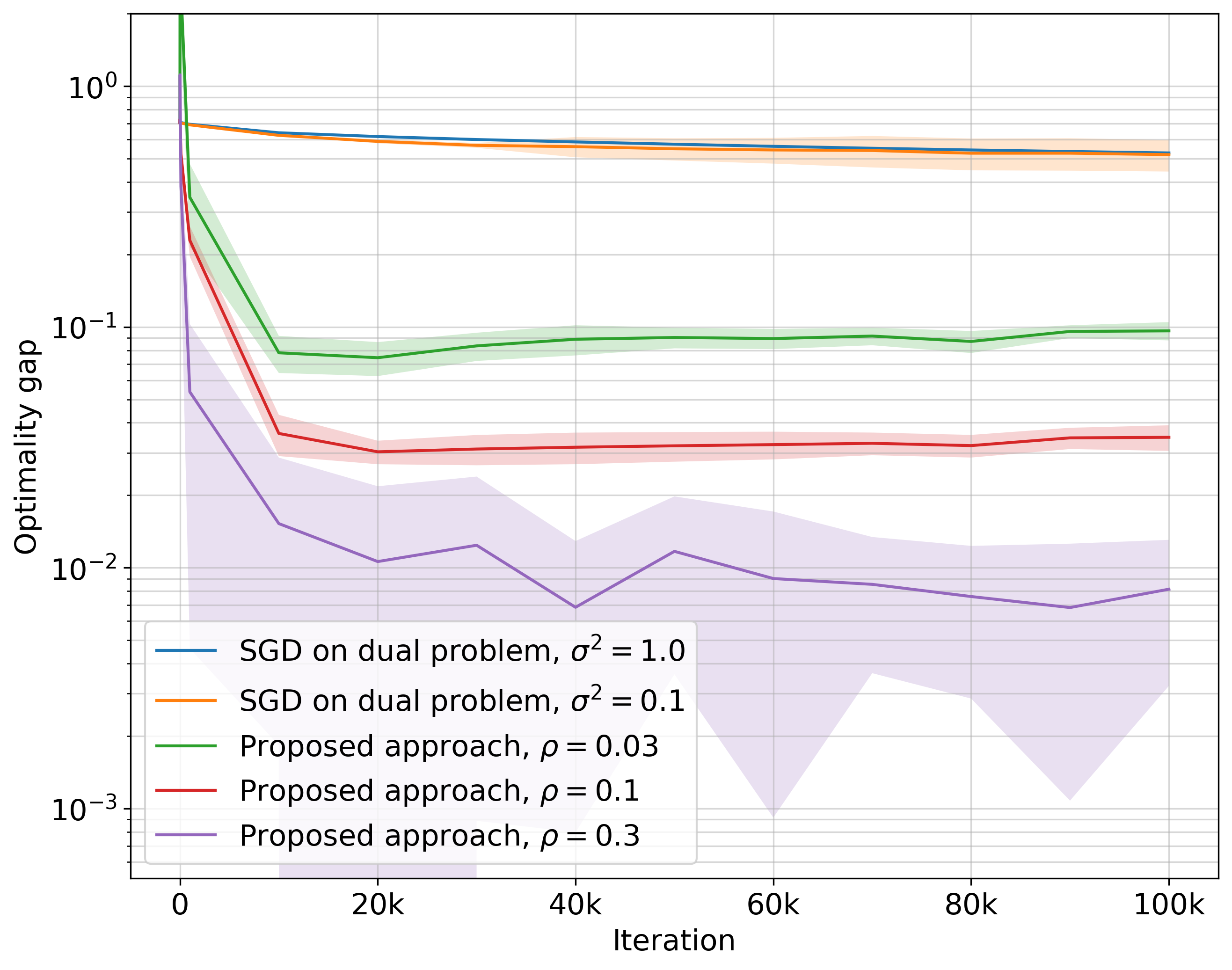}
\end{subfigure}
\begin{subfigure}{.32\textwidth}
  \centering
  \includegraphics[width=.99\linewidth]{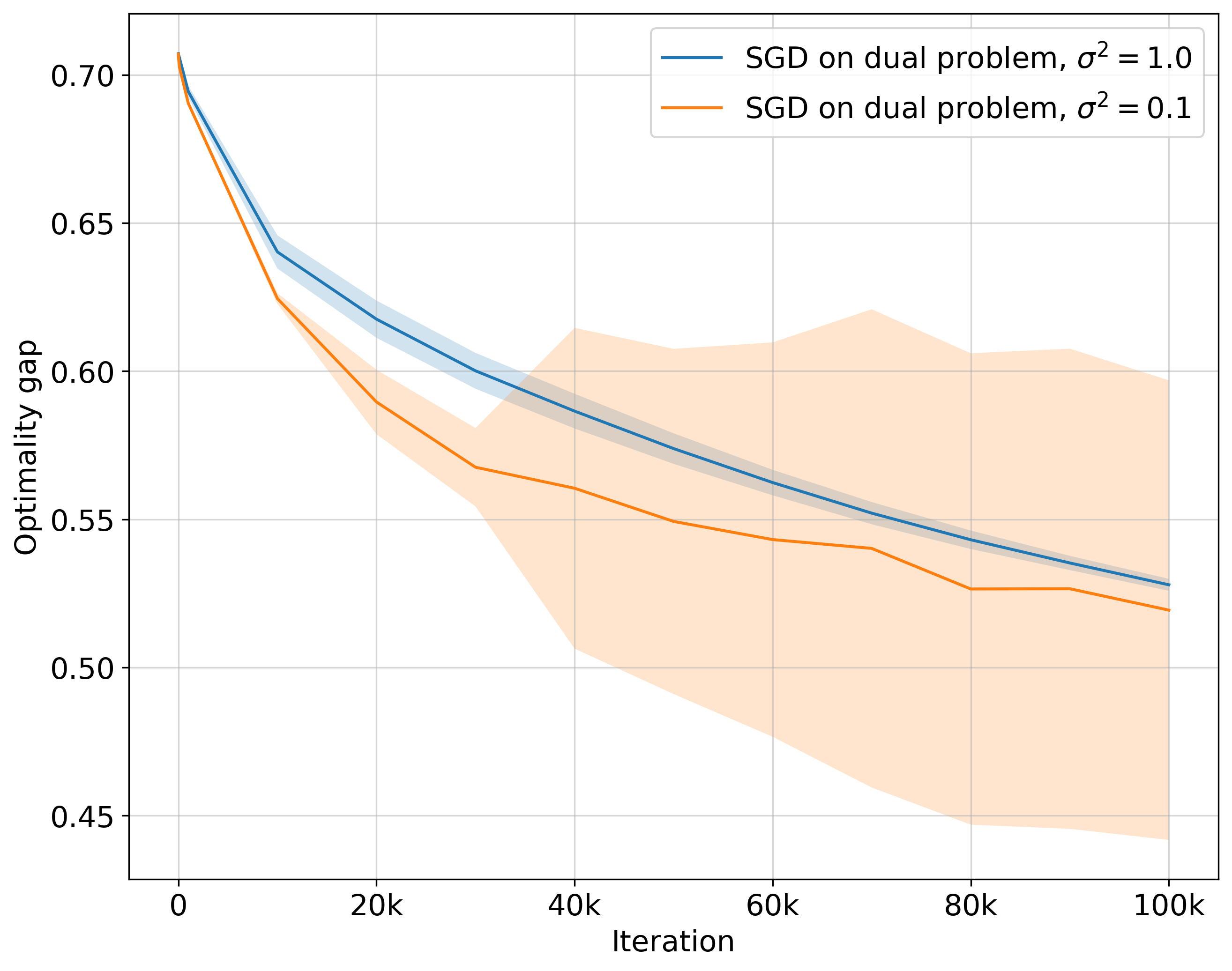}
\end{subfigure}
\begin{subfigure}{.32\textwidth}
  \centering
  \includegraphics[width=.99\linewidth]{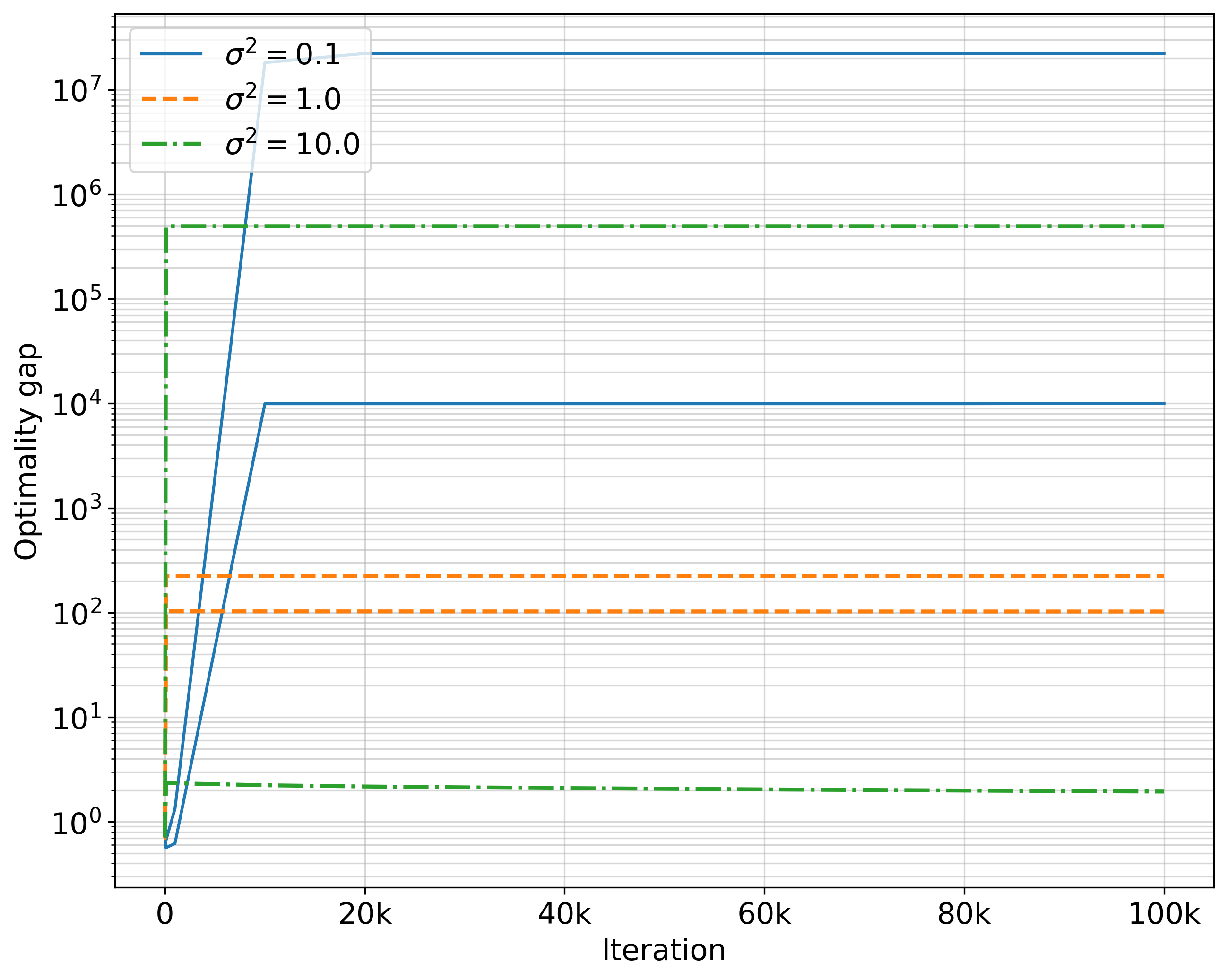}
\end{subfigure}
\caption{Left: convergence of kernel SGD applied to the dual objective \eqref{eq:f_eps} (blue and orange) and approximate semi-dual problem \eqref{eq:approx_eot} (green, red and purple). Solid lines show average optimality gap across 20 runs, shaded regions indicate $\pm$~one standard deviation. Y-axis uses logarithmic scale.
Middle: a zoomed-in view of blue and orange curves from the plot on the left.  
Right: examples of divergent optimality gap curves obtained by running the baseline approach with the stepsize parameter $C=10^{-2}$.}
\label{fig:eot_experiment_old}
\vspace{-11pt}
\end{figure}

When applying kernel SGD to the dual and approximate semi-dual formulations, we consider hyperparameters \( \sigma^2 \in \{0.1, 1, 10\} \) (kernel bandwidth), \( C \in \{10^{-4}, 10^{-3}, \ldots, 10\} \) (stepsize parameter), and \( \rho \in \{0.03, 0.1, 0.3\}\) (approximation accuracy).
Double floating-point precision is used.
In the experiment, the proposed approach works best with \( \sigma^2=10 \), and \( C=1 \) for \( \rho \in \{0.03, 0.1 \} \), \( C=10 \) for \( \rho = 0.3 \).
Baseline works best with \( \sigma^2 \in \{0.1, 1 \} \) and \( C=10^{-3} \).
Figure \ref{fig:eot_experiment_old} (left) shows performance of the two approaches. For clarity, we provide a zoomed-in view of the curves generated by the baseline in the middle.
As seen from the figures, the baseline is extremely slow, which happens due to the small stepsize.
Larger values of \( C \) lead to numerical instabilities as illustrated by the plot on the right.
This is because the exponential term can cause a large gradient magnitude at some iterations, which brings an iterate to a region where it stagnates.
On the contrary, our approximate semi-dual formulation permits larger stepsizes, which results in faster convergence. Indeed, the method usually achieves a relatively low optimality gap in about $2\cdot 10^4$ iterations, and plateaus after that.

\subsection{Computing a Proxy for Optimality Gap}\label{app:opt_gap}

Optimality gap in the experiment is estimated as follows:
\begin{enumerate}
    \item Test sets \(\{x_i\}_{i=1}^N \) and \(\{y_i\}_{i=1}^N \) of size $N=10^4$ are sampled from \(\mu\) and \(\nu\). The corresponding empirical distributions are denoted \(\hat{\mu}\) and \(\hat{\nu}\), respectively.
    \item Similarly to \cite{genevay2016stochastic}, we obtain a proxy $\hat{W}$ for $W(\mu,\nu)$ by solving the semi-discrete eOT problem
    \begin{align*}
        \max_{\mathbf{v} \in \R^N}\, \E_{X \sim \mu}\, & \hat{h}_{\varepsilon}(X, \mathbf{v}) \\
        \text{with }\, & \hat{h}_{\varepsilon}(x, \mathbf{v}) \eqset \frac{1}{N}\sum_{i=1}^N v_i -\varepsilon \log \Bigl(\frac{1}{N}\sum_{i=1}^N e^{\frac{v_i - c(x,y_i)}{\varepsilon}} \Bigr)-\varepsilon,
    \end{align*}
    which corresponds to replacing the expectation $\E_{Y \sim \nu}$ in \eqref{eq:h_eps} with the average over the test set $\E_{Y \sim \hat{\nu}}$.
    We perform 10 runs of SGD, each consisting of $2 \cdot 10^5$ iterations, and define $\hat{W}$ as the largest achieved value on the test set, i.e., the largest $\E_{X \sim \hat{\mu}}\, \hat{h}_{\varepsilon}(X, \mathbf{v})$.
    \item Finally, given a potential $v \in \mathcal{C}(\cX)$, we estimate the optimality gap as
    $\hat{W} - \E_{X \sim \hat{\mu}} \hat{h}_{\varepsilon}(X, \mathbf{v})$, where $\mathbf{v}=(v(y_1), \ldots, v(y_N))^\top$ is the evaluation of $v$ on the test set.
\end{enumerate}

\section{Additional Experimental Results for DRO}\label{app:dro}

\subsection{KL-DRO under Train--Test Distribution Shift}\label{app:kl_dro}

We additionally evaluate KL-DRO on income prediction using the 2018 ACS PUMS data obtained via Folktables~\cite{ding2021retiring}. To highlight the importance of distributionally robust training, we consider a high-income state (CA) and a low-income state (MS): 95\% of CA data is allocated to the train set and 5\% to the test set, whereas the corresponding ratios for MS are 10\% and 90\%. Thus, the train set is dominated by the high-income state, while the test set is dominated by the low-income state. Table~\ref{tab:app_kl_dro_acs} reports the objective value \eqref{eq:kl_dro_relaxed} and regression metrics for the baseline estimator \eqref{eq:batch_grad} and the proposed estimator \eqref{eq:grad_appr_dro} with different values of $\rho$. Hyperparameters are set to $\lambda=5$ and $|D|=10$. RMSE and MAE on the hardest group denote the largest RMSE and MAE, respectively, across groups defined by the RAC1P variable (Race).

\begin{table*}
\centering
\small
\setlength{\tabcolsep}{3pt}
\caption{\textbf{KL-DRO on ACS PUMS.} Objective value \eqref{eq:kl_dro_relaxed}, regression metrics, worst-group metrics, and time per epoch for the baseline \eqref{eq:batch_grad} and the proposed estimator \eqref{eq:grad_appr_dro}. Best results per column are shown in bold.}
\label{tab:app_kl_dro_acs}
\begin{tabular}{l|c|c|c|c|c|c}
\toprule
Approach         &  Objective & RMSE $(\times 10^{3})$  & \makecell{RMSE\\ on hardest\\ group $(\times 10^{3})$}  & MAE $(\times 10^{3})$  & \makecell{MAE\\ on hardest\\ group $(\times 10^{3})$} & \makecell{Time per\\ epoch (s)}  \\
    \midrule
Baseline         & $109.3\pm.4$ & $61.7\pm.9$ & $75.8\pm.3$ & $\bm{35.0\pm1.0}$ & $41.1\pm1.3$ & $\bm{92\pm3}$  \\
$\rho=10^{-1}$   & $108.3\pm.2$ & $\bm{60.3\pm.8}$ & $\bm{74.7\pm.3}$ & $\bm{34.9\pm1.0}$ & $\bm{40.1\pm1.0}$ & $99\pm5$  \\
$\rho=10^{-3}$   & $\bm{106.0\pm.9}$ & $62.1\pm1.2$ & $74.9\pm.7$ & $36.0\pm1.7$ & $40.9\pm1.7$ & $99\pm5$  \\
\bottomrule
\end{tabular}
\end{table*}

\subsection{Duality-Gap Evaluation for KL-DRO in Linear Regression}\label{app:duality_gap}

We also include a small experiment motivated by the variational form of the LogSumExp objective. For the KL-DRO objective \eqref{eq:kl_dro_relaxed}, a primal iterate $\theta^k$ naturally defines the dual weights
\[
    p_i^k
    :=
    \frac{\exp(\ell_i(\theta^k)/\lambda)}
    {\sum_j \exp(\ell_j(\theta^k)/\lambda)}.
\]
Using the entropy-regularized variational representation of LogSumExp, the corresponding duality gap can be written as
\[
    \mathrm{gap}^k
    =
    \sum_i p_i^k \ell_i(\theta^k)
    -
    \min_{\theta}
    \sum_i p_i^k \ell_i(\theta).
\]
In general, the minimization over $\theta$ makes this quantity difficult to compute exactly. Therefore, we evaluate it in a simple linear regression setting with squared losses, where the inner minimization reduces to a weighted least-squares problem and can be solved in closed form.

Figure~\ref{fig:duality_gap} reports the resulting duality gap for the proposed approach on a synthetic dataset with $n=1000$ samples and dimension $d=50$. The plot confirms that the proposed method steadily decreases the computable duality gap in this setting, complementing the objective-value comparisons reported in the main experiments.

\begin{figure}[htbp]
    \centering
    \includegraphics[width=0.6\linewidth]{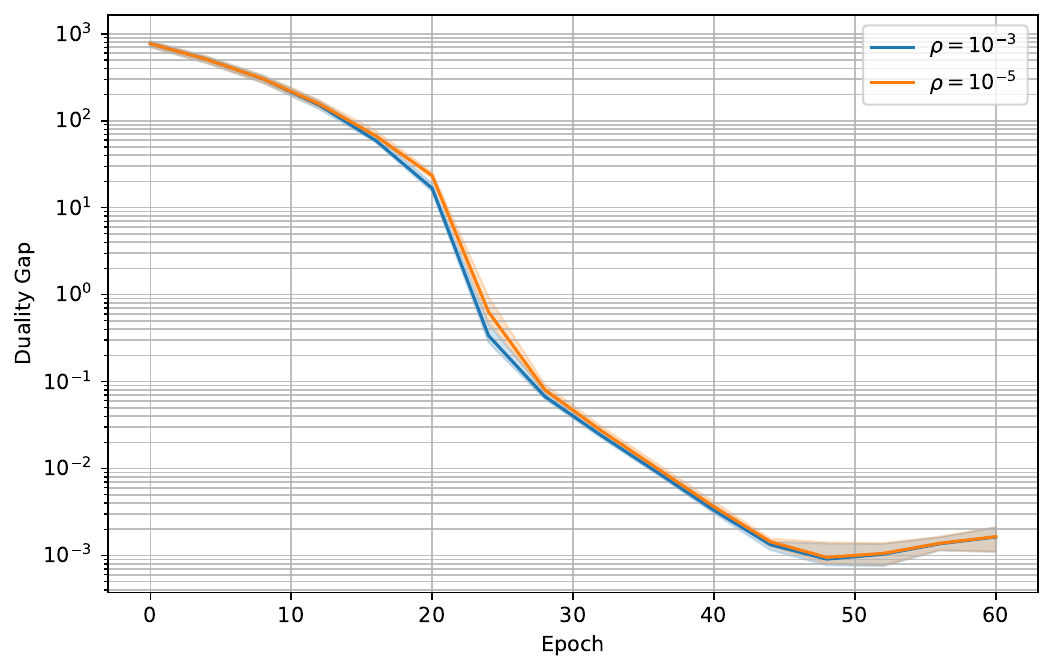}
    \caption{\textbf{Duality gap for KL-DRO in linear regression.}
    Convergence of the proposed approach for the KL-DRO objective \eqref{eq:kl_dro_relaxed} with $\lambda=1$ on a synthetic linear regression problem with squared losses, $n=1000$, and $d=50$. The duality gap is computed exactly using the weighted least-squares solution associated with the softmax dual weights at each iterate. Curves show the mean over 5 seeds, and the shaded region indicates the range between the minimum and maximum values.}
    \label{fig:duality_gap}
\end{figure}

\subsection{UOT-DRO on MNIST with Severe Label Noise}\label{app:uot_dro}

We also report an additional UOT-DRO experiment on MNIST with noisy train labels. In contrast to the main experiment, here we use a very high noise ratio of \textbf{85\%}. This setting is intended to test whether better objective values translate into better prediction metrics under severe label corruption.

Table~\ref{tab:app_uot_dro_mnist} reports the objective value \eqref{eq:lse_dro} at epoch 20 for the baseline \eqref{eq:sumexp} of~\citep{wang2024outlier} and the proposed approximation \eqref{eq:uot_dro_approx} with different values of $\rho$. Results are averaged over 5 runs, and the best result in each column is highlighted in bold.

\begin{table*}
\centering
\small
\setlength{\tabcolsep}{3pt}
\caption{\textbf{UOT-DRO on MNIST.} Objective value \eqref{eq:lse_dro} at epoch 20 under 85\% label noise for the baseline \eqref{eq:sumexp} and the proposed approximation \eqref{eq:uot_dro_approx}. Results are averaged over 5 runs; best results per column are shown in bold.}
\label{tab:app_uot_dro_mnist}
\begin{tabular}{l|ccc|ccc|ccc}
\toprule
  & \multicolumn{3}{c|}{$\gamma=1/5$} & \multicolumn{3}{c|}{$\gamma=1$} & \multicolumn{3}{c}{$\gamma=5$} \\
\cline{2-4} \cline{5-7} \cline{8-10}
Approach & $\lambda=1/5$ & $\lambda=1$ & $\lambda=5$ & $\lambda=1/5$ & $\lambda=1$ & $\lambda=5$ & $\lambda=1/5$ & $\lambda=1$ & $\lambda=5$ \\
\midrule
Baseline & $1.92\!\pm\!.01$ & $\bm{1.92\!\pm\!.01}$ & $1.91\!\pm\!.01$ & $1.30\!\pm\!.01$ & $1.26\!\pm\!.01$ & $1.25\!\pm\!.01$ & $0.86\!\pm\!.05$ & $0.76\!\pm\!.07$ & $0.59\!\pm\!.05$ \\
\hline
$\rho\!=\!1$ & $5.74\!\pm\!.59$ & $6.39\!\pm\!.57$ & $7.02\!\pm\!1.12$ & $1.24\!\pm\!.34$ & $0.92\!\pm\!.10$ & $1.11\!\pm\!.22$ & $\bm{0.67\!\pm\!.04}$ & $\bm{0.49\!\pm\!.02}$ & $0.43\!\pm\!.09$ \\
\hline
$\rho\!=\!10^{-1}$ & $\bm{1.43\!\pm\!.15}$ & $\bm{1.46\!\pm\!.48}$ & $1.45\!\pm\!.56$ & $0.90\!\pm\!.02$ & $\bm{0.83\!\pm\!.03}$ & $\bm{0.80\!\pm\!.04}$ & $0.69\!\pm\!.08$ & $0.54\!\pm\!.11$ & $\bm{0.43\!\pm\!.05}$ \\
\hline
$\rho\!=\!10^{-2}$ & $1.59\!\pm\!.06$ & $2.17\!\pm\!.08$ & $\bm{1.53\!\pm\!.05}$ & $\bm{0.89\!\pm\!.03}$ & $0.99\!\pm\!.28$ & $1.10\!\pm\!.02$ & $0.68\!\pm\!.08$ & $0.57\!\pm\!.10$ & $0.70\!\pm\!.05$ \\
\bottomrule
\end{tabular}
\end{table*}

To complement the objective values in Table~\ref{tab:app_uot_dro_mnist}, we report the best classification metrics achieved on the test set in Table~\ref{tab:app_uot_dro_mnist_metrics}. For each metric, we select the best value over all considered pairs $(\gamma,\lambda)$ separately for the baseline and for the proposed approach. We also report hardest-class metrics, defined as the worst value across classes for a fixed pair $(\gamma,\lambda)$, followed by selecting the best such value across pairs. These results show that, under severe label corruption, the improved DRO objective values are reflected in better cross-entropy, worst-class performance, and ROC AUC, although the baseline achieves higher overall accuracy. 

\begin{table*}
\centering
\small
\setlength{\tabcolsep}{5pt}
\caption{\textbf{UOT-DRO on MNIST: classification metrics.}
Best test-set classification metrics under 85\% label noise for the baseline \eqref{eq:sumexp} and the proposed approximation \eqref{eq:uot_dro_approx}. For each method, the best value is selected over all considered pairs $(\gamma,\lambda)$. Hardest-class metrics are computed as the worst value across classes for each pair $(\gamma,\lambda)$, followed by selecting the best such value across pairs. Best results per row are shown in bold.}
\label{tab:app_uot_dro_mnist_metrics}
\begin{tabular}{l|c|c}
\toprule
Metric & Baseline & Proposed approach \\
\midrule
Cross-entropy (CE)
& $1.36\pm.03$
& $\bm{1.29\pm.08}$ \\
CE on hardest class
& $2.15\pm.03$
& $\bm{1.87\pm.07}$ \\
Accuracy
& $\bm{0.41\pm.04}$
& $0.34\pm.02$ \\
Accuracy on hardest class
& $0.03\pm.01$
& $\bm{0.07\pm.03}$ \\
ROC AUC
& $0.88\pm.00$
& $\bm{0.90\pm.00}$ \\
ROC AUC on hardest class
& $0.27\pm.06$
& $\bm{0.45\pm.06}$ \\
\midrule
Avg. time per epoch (s)
& $\bm{45.7\pm.6}$
& $49.2\pm2.8$ \\
\bottomrule
\end{tabular}
\end{table*}

\section{Properties of SoftPlus}\label{app:softplus}

Let \( F(x) = \log (1 + e^{f(x)}) \). Then
\begin{align}
    \nabla F(x) &= \sigma(f(x)) \nabla f(x), \\
    \nabla^2 F(x) &= \sigma(f(x)) \nabla^2 f(x) + \sigma(f(x)) \bigl( 1 - \sigma(f(x))\bigr) \nabla f(x) \nabla f(x)^\top.
    \label{eq:softplus_hessian}
\end{align}
Suppose \( f(x) \) is \(L\)-smooth (possibly non-convex). Let us derive smoothness constant of \(F\). We will use the following
\begin{lemma}\label{lem:lm1}
    Consider function \( f_a(x)=\sigma(x) + 2 \sigma'(x)(x-a),\, x\geq a \) with parameter \(a\leq 0\). It holds \( f_a(x) \leq 2-\frac{a}{2} \).
\end{lemma}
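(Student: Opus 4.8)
The plan is to split off the dependence on $a$ and bound the two resulting pieces separately, using only elementary facts about $\sigma$. First I would record that $\sigma'(x) = \sigma(x)\bigl(1 - \sigma(x)\bigr)$, so that $0 < \sigma'(x) \le \tfrac14$ with the maximum attained at $x = 0$. Rewriting
\[
f_a(x) = \bigl(\sigma(x) + 2x\,\sigma'(x)\bigr) - 2a\,\sigma'(x),
\]
I would dispose of the $a$-term immediately: since $a \le 0$ gives $-a \ge 0$, combining this with $\sigma'(x) \le \tfrac14$ yields $-2a\,\sigma'(x) \le -\tfrac{a}{2}$ pointwise in $x$. It then remains to show $g(x) := \sigma(x) + 2x\,\sigma'(x) \le 2$ for all $x$; note that the constraint $x \ge a$ is not actually needed, and both bounds hold pointwise, so summing them is legitimate.

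Because $\sigma(x) \le 1$, the whole task reduces to the single estimate $2x\,\sigma'(x) \le 1$, which is the technical core. I would clear denominators: writing $\sigma'(x) = e^x/(1+e^x)^2$, the inequality $2x\,\sigma'(x) \le 1$ is equivalent to $2x e^x \le (1+e^x)^2$, and dividing by $e^x > 0$ turns this into $2(x-1) \le e^x + e^{-x} = 2\cosh x$. Thus everything comes down to the clean inequality $x - 1 \le \cosh x$.

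I would close with the standard bound $\cosh x \ge 1 + \tfrac{x^2}{2}$ together with the identity $1 + \tfrac{x^2}{2} - (x-1) = \tfrac12\bigl((x-1)^2 + 3\bigr) > 0$, which proves $x - 1 \le \cosh x$ for every real $x$; hence $2x\,\sigma'(x) \le 1$ and $g(x) \le 2$. Combining with the bound on the $a$-term gives $f_a(x) \le 2 - \tfrac{a}{2}$. The hard part will be isolating the right reformulation of $2x\,\sigma'(x) \le 1$: once it is recast as $x - 1 \le \cosh x$ the remainder is routine, whereas a direct calculus attack on $g$ (setting $g'(x) = 0$, i.e.\ $\sigma'(x)\bigl[3 + 2x(1 - 2\sigma(x))\bigr] = 0$, leads to a transcendental equation) is precisely what one should avoid.
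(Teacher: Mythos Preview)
Your argument is correct, and it is genuinely different from the paper's. The paper keeps $f_a$ intact and splits the domain: for $x\le 2$ the crude bounds $\sigma(x)\le 1$, $\sigma'(x)\le\tfrac14$ give $f_a(x)\le 1+\tfrac{x-a}{2}\le 2-\tfrac a2$, while for $x>2$ it shows $f_a'(x)=\sigma'(x)\bigl[3+2(1-2\sigma(x))(x-a)\bigr]<0$ via the numerical estimate $\sigma(2)>0.88$ together with $x-a>2$, so $f_a(x)<f_a(2)\le 2-\tfrac a2$. Your decomposition $f_a(x)=g(x)-2a\,\sigma'(x)$ separates the $a$-dependence cleanly and reduces everything to the single global inequality $2x\,\sigma'(x)\le 1$, which you recast as $x-1\le\cosh x$. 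This avoids both the case split and the numerical check, and in fact proves the bound for all $x\in\R$ rather than only $x\ge a$. The paper's route is more pedestrian but perhaps more discoverable; yours is tighter and self-contained. Amusingly, the ``direct calculus attack'' you warn against is close in spirit to what the paper actually does---though the paper only needs a sign on $f_a'$ for $x>2$, not the location of critical points, so it never has to solve the transcendental equation.
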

\begin{proof}
    By the properties of the sigmoid function $\sigma(x)$, \(\sigma'(x)\leq \frac{1}{4}\) and \(\sigma(x)\leq 1\). Therefore, \( f_a(x)\leq 1 + \frac{x-a}{2} \). If \(x \leq 2 \), the result follows. Let us now show that the derivative
    \[
    	\frac{d}{dx}f_a(x) = \sigma'(x)[3 + 2 (1-2\sigma(x))(x-a)]
    \]
    is negative if \(x > 2\). Indeed, due to monotonicity of the sigmoid function $\sigma(x)$,
    \[
    	\sigma(x) > \sigma(2) > 0.88 \Rightarrow 2 (1-2\sigma(x)) < -\frac{3}{2}.
    \]
    Moreover, \( x-a > 2 \), so \(3 + 2 (1-2\sigma(x))(x-a) < 0\) and \( \frac{d}{dx}f_a(x)<0 \). Therefore, if \(x > 2\), then \( f_a(x) < f_a(2) \leq 2-\frac{a}{2} \).
\end{proof}

\begin{proposition}\label{prop:softplus_smooth}
    Let \( f \in C^1(\R^d) \) be \(L\)-smooth and bounded from below by $f_* \in \R$,
    then \( F(x) = \log (1 + e^{f(x)}) \) is smooth with parameter
    \begin{equation}\label{eq:softplus_L}
        \begin{cases}
            \frac{4}{3} L & \text{if } f_* \geq 0, \\
            \left(\frac{4}{3} - \frac{f_*}{2} \right) L & \text{if } f_* < 0 .
        \end{cases}
    \end{equation}
\end{proposition}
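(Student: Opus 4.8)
The plan is to bound the operator norm of the Hessian of $F$ and turn this into a global gradient-Lipschitz (i.e.\ smoothness) estimate. Since $f$ is only assumed $C^1$, I would first reduce to the twice-differentiable case by mollification: convolving $f$ with a smooth probability kernel $\eta_\epsilon$ yields $f_\epsilon \eqset f * \eta_\epsilon \in C^\infty$ that is still $L$-smooth (convolution does not increase the Lipschitz constant of $\nabla f$) and still bounded below by $f_*$ (since $f_\epsilon(x) = \int f(x-y)\,\eta_\epsilon(y)\dd y \ge f_*$). Proving the Hessian bound for each $f_\epsilon$ then gives a uniform Lipschitz constant for $\nabla F_\epsilon = \sigma(f_\epsilon)\nabla f_\epsilon$; as $\epsilon \to 0$ we have $f_\epsilon \to f$ and $\nabla f_\epsilon \to \nabla f$ locally uniformly, so $\nabla F_\epsilon \to \nabla F$ pointwise, and a pointwise limit of $\beta$-Lipschitz maps is $\beta$-Lipschitz. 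Hence it suffices to assume $f \in C^2$ and argue from \eqref{eq:softplus_hessian}.

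Next, from \eqref{eq:softplus_hessian}, the triangle inequality for the operator norm, the bounds $0 \le \sigma \le 1$ and $\|\nabla^2 f\| \le L$, and the fact that the rank-one term $\sigma'(f)\nabla f\,\nabla f^\top$ is positive semidefinite, I would obtain $\|\nabla^2 F(x)\| \le \sigma(f(x))\,L + \sigma'(f(x))\,\|\nabla f(x)\|^2$. The crucial analytic input is the standard estimate $\|\nabla f(x)\|^2 \le 2L\,(f(x) - f_*)$, valid for any $L$-smooth $f$ bounded below by $f_*$ (apply the descent lemma at $x - L^{-1}\nabla f(x)$ and use $f \ge f_*$). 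Writing $s \eqset f(x) \ge f_*$ and substituting gives $\|\nabla^2 F(x)\| \le L\bigl[\sigma(s) + 2\sigma'(s)(s - f_*)\bigr]$, so the problem reduces to bounding the scalar function $g(s) \eqset \sigma(s) + 2\sigma'(s)(s - f_*)$ on $\{s \ge f_*\}$ — precisely the function $f_{f_*}$ of Lemma~\ref{lem:lm1}.

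To recover the sharp constants in \eqref{eq:softplus_L} I would decompose $g(s) = \phi(s) + 2\sigma'(s)(-f_*)$, where $\phi(s) \eqset \sigma(s) + 2\sigma'(s)s$ is the $f_* = 0$ profile. The core estimate is $\phi(s) \le \tfrac43$ for all $s \in \R$: for $s \le 0$ it is immediate since $2\sigma'(s)s \le 0$ and $\sigma(s) \le \tfrac12$, while for $s \ge 0$ it follows from the critical-point analysis of Lemma~\ref{lem:lm1}, namely $\phi'(s) = \sigma'(s)\bigl[3 + 2(1 - 2\sigma(s))s\bigr]$ has a unique positive root $s^*$ (the bracket is strictly decreasing on $(0,\infty)$), at which $\phi$ attains its maximum. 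Then two cases finish the proof: if $f_* \ge 0$, the term $2\sigma'(s)(-f_*) \le 0$, so $g(s) \le \phi(s) \le \tfrac43$; if $f_* < 0$, I bound $2\sigma'(s)(-f_*) \le 2 \cdot \tfrac14 \cdot (-f_*) = -\tfrac{f_*}{2}$ using $\sigma' \le \tfrac14$, giving $g(s) \le \tfrac43 - \tfrac{f_*}{2}$. Multiplying the bound on $g$ by $L$ reproduces \eqref{eq:softplus_L}.

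I expect the main obstacle to be the tight scalar inequality $\phi(s) \le \tfrac43$. Its true maximum is irrational and lies only marginally below $\tfrac43$, so the constant is not generous: evaluating $\phi$ at the interior maximizer $s^*$ and using $s^* = \tfrac{3}{2(2\sigma(s^*) - 1)}$ reduces the claim to showing $\sigma(s^*) \ge (\sqrt{13} - 1)/3 \approx 0.869$, which forces one to locate $s^*$ quantitatively rather than settle for the cruder bound $\phi \le 2$ that the elementary estimates $\sigma \le 1$ and $\sigma' \le \tfrac14$ would give directly. Everything else — the mollification reduction, the Hessian norm bound, and the descent-lemma gradient estimate — is routine.
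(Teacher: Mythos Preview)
Your proposal is correct and follows essentially the same route as the paper: reduce to $f \in C^2$, bound $\|\nabla^2 F\|$ via \eqref{eq:softplus_hessian} together with $\|\nabla f\|^2 \le 2L(f-f_*)$, then split $g(s)=\phi(s)-2\sigma'(s)f_*$ and treat the two sign cases for $f_*$ using $\phi\le\tfrac43$ and $\sigma'\le\tfrac14$. You are in fact more explicit than the paper on both the mollification step (which the paper dismisses as ``W.l.o.g.'') and the scalar bound $\max_s\phi(s)<\tfrac43$ (which the paper simply asserts without proof), so your identification of the latter as the only nontrivial point is spot on.
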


\begin{proof}
    W.l.o.g., we can assume that $f \in C^2$.
    From~\eqref{eq:softplus_hessian} and Lemma~\ref{lem:lm1} we get
    \begin{align*}
        \norm{\nabla^2 F(x)} &\le \sigma(f(x)) \norm{\nabla^2 f(x)} + \sigma'(f(x)) \norm{\nabla f(x)}^2 \\
        &\le L \sigma(f(x)) + 2 L \sigma'(f(x)) (f(x)-f_*) \\
        &= L \bigl(\sigma(f(x)) + 2 \sigma'(f(x)) f(x)\bigr) - 2 L \sigma'(f(x)) f_* .
    \end{align*}
    Analyzing the function $h(t) \eqset \bigl(\sigma(t) + 2 t \sigma'(t)\bigr)$, one can show that $\max_t h(t) < \frac{4}{3}$.
    Thus, in the case $f_* \ge 0$, using the fact that $\sigma'(t) > 0$ we obtain
    \[
    \norm{\nabla^2 F(x)} \le L h(f(x)) \le \frac{4}{3} L .
    \]

    Now, consider the case $f_* < 0$. Since $\sigma'(t) = \sigma(t) (1 - \sigma(t)) \le \frac{1}{4}$, 
    \[
    \norm{\nabla^2 F(x)} \le L h(f(x)) - 2 L \sigma'(f(x)) f_* 
    \le \frac{4}{3} L - \frac{L}{2} f_* .
    \]
    The claim follows.
    
\end{proof}

\begin{remark}
    The factor \(\frac{1}{2}\) in front of \( -f_* \) in \eqref{eq:softplus_L} cannot be improved. Indeed, consider \( f(x) = \frac{1}{2}(x-a)^2-\frac{1}{2}a^2 \) with \( f_* = -\frac{1}{2}a^2 \). The second derivative of \( F(x) = \log (1 + e^{f(x)}) \) is
    \begin{align*}
        F''(x) &= \sigma(f(x)) + \sigma(f(x)) \left(1 - \sigma(f(x))\right) (x-a)^2, \\
        F''(0) &= \sigma(0) + \sigma(0) \left( 1 - \sigma(0)\right) a^2 = \frac{1}{2} + \frac{a^2}{4} = \frac{1}{2} - \frac{f_*}{2} .
    \end{align*}
\end{remark}

\begin{proposition}
    If \( f \) is convex, then \( F(x) = \log (1 + e^{f(x)}) \) is also convex.
\end{proposition}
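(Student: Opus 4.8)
The plan is to recognize $F = s \circ f$, where $s(t) \eqset \log(1 + e^t)$ is the scalar SoftPlus, and to invoke the standard rule that the composition of a convex \emph{non-decreasing} function with a convex function is convex. First I would establish the two required properties of the outer function $s$. Differentiating once gives $s'(t) = \sigma(t) \in (0,1)$, so $s$ is strictly increasing; differentiating again gives $s''(t) = \sigma'(t) = \sigma(t)(1 - \sigma(t)) > 0$, so $s$ is convex. Both facts are already recorded in the excerpt's derivative computations preceding the gradient formulas.

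With these in hand, the composition argument is immediate. For any $x, y \in \R^d$ and $\lambda \in [0,1]$, convexity of $f$ gives $f(\lambda x + (1-\lambda) y) \le \lambda f(x) + (1-\lambda) f(y)$; applying the non-decreasing map $s$ preserves this inequality, and then convexity of $s$ yields $s\bigl(\lambda f(x) + (1-\lambda) f(y)\bigr) \le \lambda\, s(f(x)) + (1-\lambda)\, s(f(y))$. Chaining the two steps produces $F(\lambda x + (1-\lambda) y) \le \lambda F(x) + (1-\lambda) F(y)$, which is exactly convexity of $F$. A virtue of this route is that it requires no smoothness assumption on $f$.

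Alternatively, if one is content to assume $f \in C^2$ (as in the preceding proposition), the result drops out of the Hessian formula \eqref{eq:softplus_hessian}. Since $\sigma(f(x)) > 0$ and $\nabla^2 f(x) \succeq 0$ by convexity, the first term $\sigma(f(x)) \nabla^2 f(x)$ is positive semidefinite; the second term $\sigma(f(x))\bigl(1 - \sigma(f(x))\bigr) \nabla f(x) \nabla f(x)^\top$ is a non-negative scalar times a rank-one positive semidefinite matrix, hence also positive semidefinite. Therefore $\nabla^2 F(x) \succeq 0$ everywhere, giving convexity.

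I do not anticipate a genuine obstacle here, as the statement is a textbook composition fact. The only point requiring the slightest care is ensuring the outer function is non-decreasing and not merely convex---convexity of $s$ alone would not suffice for the composition to be convex---but this is guaranteed by $s' = \sigma > 0$. I would therefore present the composition proof as the main argument, for its generality and brevity, and note the Hessian computation as an immediate corollary of the already-derived formula \eqref{eq:softplus_hessian}.
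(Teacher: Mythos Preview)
Your proposal is correct. The paper's own proof is precisely your second alternative: it simply says the result ``trivially follows from \eqref{eq:softplus_hessian}'', i.e., both summands in the Hessian are positive semidefinite once $\nabla^2 f \succeq 0$. Your primary route via the composition rule---$s(t) = \log(1+e^t)$ is convex and non-decreasing, hence $s \circ f$ is convex---is a genuinely different and slightly more general argument, since it dispenses with any $C^2$ (or even $C^1$) assumption on $f$ and works for arbitrary convex $f$. The trade-off is minimal here: the Hessian route is a one-liner given that \eqref{eq:softplus_hessian} is already on the page, while your composition argument is self-contained and sharper in scope. Either is entirely adequate for this elementary fact.
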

\begin{proof}
    Trivially follows from \eqref{eq:softplus_hessian}.
\end{proof}

\section{LLM Usage Disclosure}
In the preparation of this manuscript, large language models (LLMs) were used to improve the readability.
All substantive contributions are solely by the authors.

\newpage

\end{document}